\newcommand{\ZZ}{\mathsf Z}
\newcommand{\dts}{.\,.\,}
\newtheorem{theorem}{Theorem}
\theoremstyle{lemma}
\newtheorem{lemma}{Lemma}
\theoremstyle{remark}
\newtheorem{remark}{Remark}
\title{Particle detection among Random Walks as a non-reversible Random Interlacements Process}
\author[1]{Carlos Martinez}
\author[2]{Gonzalo Panizo}
\affil[1]{Instituto de Matemática y Ciencias Afines, Universidad Nacional de Ingeniería, Lima, Perú.}
\affil[2]{Instituto de Matemática y Ciencias Afines, Universidad Nacional de Ingeniería, Lima, Perú.\\E-mail: gonzalo@imca.edu.pe}
\begin{document}
\maketitle

\begin{abstract}
  Consider the random set composed of particles initially distributed on \( \mathsf{Z}^d \),
  \( d \geq 2 \), according to a Poisson point process of intensity \( u > 0 \) and moving as
  independent simple symmetric random walks, the trap particles. We are interested in the detection
  by these particles of a target particle, initially at the origin and able to move with finite
  mean speed. The escape strategy for the target particle is to stay inside the infinite
  cluster of empty sites, assuming \( u \) is in the subcritical site percolation regime of
  particle occupation.  By translating the problem to the framework of percolation of Random
  Interlacements we also prove that for \( u \) large enough the target doesn't escape. In doing
  this we extend the random interlacements formalism in order to allow non reversible random
  walks. As far a we know this is the first example of Random Interlacements for non-reversible
  Markov chains.
\end{abstract}

\section{Introduction} 
\label{sec:introduction}

\par The motivation for this article is twofold.  One is to treat a particular problem of escaping from
randomly moving traps, also known as target detection problem. The other is to show that the Random
Interlacements basic framework \cite{sznitman2010vacant} is flexible enough to allow non-reversible
path laws and make use of the renormalization machinery already developed for percolation problems
of Random Interlacements to address one aspect of the problem just mentioned.

\par We begin by formulating the target detection problem and stating our main result. See
\cite{sidoravicius2015phase}, \cite{stauffer2015space}, \cite{drewitz2011survival},
\cite{athreya2019random} and \Cite{peres2013mobile} for slightly different views of this
problem. In particular Sidoravicius and Stauffer \cite{sidoravicius2015phase} solve a very similar
problem where a target particle moving with bounded speed escapes from a uniformly distributed set
of trap particles moving as independent simple symmetric random walks on \( \mathsf{Z}^d \).  The
escape strategy they propose is with the target particle moving at all times away from the origin,
which seems somewhat artificial in view of the symmetry of the problem. The vision they adopt about
the problem is that of a directed percolation problem in space-time, which demands the target
particle to be clairvoyant in the sense that each step of the target particle depends on the whole
history of the trap particles. In contrast our escape strategy is adapted, and in fact previsible,
but the price we pay is that the speed of the target particle is bounded only in average.

\par First we need some notation. Convene from now on that the natural numbers start at \( 0 \), i.e.
\( \mathsf{N} = \{0,1, \dots \} \). The max and \( l_1 \) norms in \( \mathsf{Z}^d \) are defined
as \( |x|_\infty = \max \{x_1, \dots ,x_d\} \) and \( |x|_1=|x_1|+\cdots+|x_d| \), and denote by
\( B(x,r)=\{y \in \mathsf{Z}^d: |x-y|_\infty \le r\} \) the ball centered at \( x \in \ZZ^d \) with
radius \( r>0 \). Vertices \( x,y \in \mathsf{Z}^d\) with \( |x-y|_1=1 \) are said to be neighbors,
denoted as \( x \sim y \). We say a set \( K\subset \mathsf{Z}^d \) is connected, if for every pair
of vertices \( x,y \) in \( K \) there is a finite sequence of neighboring vertices in \( K \)
going from \( x \) to \( y \), that is \( x=x_0\sim x_1\sim \cdots \sim x_n=y \), \( x_i\in K \)
\( \forall i \). The inside and outside boundary of a subset \( K \subset \mathsf{Z}^d \),
\( \partial^iK \) and \( \partial^oK \), are, respectively, the set of vertices in \( K \) having
some neighbor not in \( K \), the set of vertices not in \( K \) having some neighbor in \( K \).
Finally, \( K^\circ=K\setminus\partial^iK \) denotes the interior and \(|K|\) the cardinality of
\(K\).

\par Consider now a set of point particles initially distributed as a Poisson point process
\( \Pi^u \) of intensity \( u >0 \) over \( \mathsf{Z}^d \). We refer to the points of \( \Pi^u \)
as traps, and let each trap move independently as a discrete time simple symmetric random walk on
\( \mathsf{Z}^d \). We want to show that there is a nontrivial phase transition in the intensity
parameter \( u \) for each average speed \( s>0 \), in the sense that there exists a value
\( u _c(s) \in\,]0,\infty [ \) such that with positive probability a particle initially at the
origin and moving with a speed not exceeding \( s \) on average, escapes forever the traps for
\( u <u _c(s) \). Otherwise, if \( u >u_c(s) \) there is zero probability that the particle at the
origin moving with mean speed not exceeding \( s \) escapes.

More precisely, let us denote by \( \eta_0^j \), \( j\geq 1 \), the points of the Poisson point process
\( \Pi^u \) of intensity \( u>0 \), which will define the initial positions of the traps. Let
\begin{equation*}
  \eta_t^j=\eta_0^j + \zeta_t^j, \quad t\geq 0, j\geq 1
\end{equation*}
be their trajectories, where \( (\zeta_t^j)_{t\geq 0} \), \( j\geq 1 \) are independent simple symmetric random
walks on \( \mathsf{Z}^d \) starting at the origin.  We also introduce the space-time configuration
of traps, also called the environment, as the random element
\( \xi \in \{0,1\}^{\mathsf{Z}^d\times \mathsf{N}} \) given by \( \xi (x,t)=1 \) in case
\( \eta^j_t=x \) for some \( j\geq 1 \) and \( \xi (x,t)=0 \) otherwise, and denote its law by
\( P^u \). This can be seen as a non-independent site percolation model on
\( \mathsf{Z}^d\times \mathsf{N}\), where a site \( (x,t) \) is open if no trap occupy \( x \) at
time \( t \) and closed otherwise. The escape problem we are interested in can be rephrased as
the following directed percolation problem.

\par We say that \( \rho = (x_k,t_k)_{k\geq 0}\), where \( x_k\in \mathsf{Z}^d \),
\( t_k\in \mathsf{N} \), \( k\geq 0 \), is an escape trajectory for the target particle if
\begin{enumerate}[(1)]
\item \( (x_0,t_0),(x_1,t_1), \dots \), is a nearest neighbor path in
  \( \mathsf{Z}^d\times \mathsf{N} \subset \mathsf{Z}^{d+1} \), that is, for every \( k\geq 0 \),
  \( |(x_{k+1},t_{k+1}) - (x_k,t_k)|_1=1 \),
\item \( (x_0,t_0)=(0,0) \), \( (t_k)_{k\geq 0} \) is non-decreasing and \( \lim_{k\to\infty}t_k=+\infty \).
\end{enumerate}
The speed at time \( t \geq 0\) of an escape trajectory \( \rho \) is defined as
\begin{equation*}
  S_t=S_t(\rho)= \max\{i:t_i=t\}-\min\{i:t_i=t\}.
\end{equation*}
and its detection time in the environment \( \xi \) as
\begin{equation*}
  T_{\rm det}=T_{\rm det}(\rho)=t_\kappa, \text{ where } \kappa =\inf\{k\geq 0:\xi (x_k,t_k)=1\}.
\end{equation*}
A function associating to each realization of the environment \( \xi \) an escape trajectory
\( \rho(\xi) =(x_k,t_k)_{k\geq 0} \) is called an escape strategy. We say \( \rho(\xi) \)
is adapted if \( \rho_k(\xi)=(x_k,t_k) \in \mathcal{G}_{t_k}\) for all \( k\geq 0 \), where
\begin{equation*}
  \mathcal{G}_t=\sigma(\xi(x,s): x\in\mathsf{Z}^d, s\leq t)
\end{equation*}
is the time filtration generated by the environment.  Notice that for \( \rho \) escape strategy,
\( S_t=S_t(\rho) \) and \( T_{\rm det}=T_{\rm det}(\rho) \) are random variables depending on the
environment \( \xi \).  With this in mind we define the critical escape intensity for speed
\( s\geq 0 \) as
\begin{multline}
  \label{eq:lambda-critical}
  u_c(s)= \sup \{u \geq 0: \text{ exists \( \rho  \) adapted escape strategy}\\
  \text{with }
  P^u(T_{\rm det}=\infty)>0 \text{ and } E^u [\,S_t \,|\, T_d=\infty] \leq s \text{ for all }t\geq 0 \}.
\end{multline}
\begin{theorem}
  \label{thm:perc}
  Let \( d\geq 2 \). For every \( s>0 \), \( 0<u_c(s)<\infty \).
\end{theorem}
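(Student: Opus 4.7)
The theorem splits into two separate bounds, $u_c(s)>0$ and $u_c(s)<\infty$, which require quite different techniques. A preliminary observation shared by both is that, at a fixed time $t$, the occupation field $\xi(\cdot,t)$ is a Bernoulli site percolation with parameter $p_u=1-e^{-u}$, by thinning of $\Pi^u$. For $d\geq 2$ and $u$ small enough $p_u$ lies below the critical site percolation threshold on $\ZZ^d$, so the vacant sites percolate at every fixed time; for $u$ large $p_u$ is close to $1$ and the occupied sites percolate instead.

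For the lower bound $u_c(s)>0$, I would exhibit, for $u$ small enough, an adapted escape strategy whose target at each time $t$ remains inside the infinite cluster $\mathcal{C}_t$ of empty sites at that time. To handle the evolution of $\mathcal{C}_t$, I would perform a static renormalization of space-time into boxes of side $L$ and temporal width of order $L$: a box is declared \emph{good} if, at every time slice it contains, the empty sites have a unique dominant crossing cluster, and these dominant clusters at consecutive time slices overlap enough that the target can hop from one to the next with $O(L)$ horizontal moves. A Peierls estimate in the subcritical regime for $p_u$, combined with a Liggett--Schonmann--Stacey stochastic domination to dispose of the finite-range temporal dependence, shows that good boxes dominate a supercritical oriented Bernoulli percolation on $\ZZ^{d+1}$. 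Following an oriented open cluster of good boxes from the origin yields an adapted escape trajectory whose per-time-step speed is $O(1)$; by inserting idle waiting steps between horizontal moves the conditional mean speed $E^u[S_t\mid T_{\rm det}=\infty]$ can be made $\leq s$ for every $t$, and survival has positive probability by FKG and uniqueness of the infinite cluster.

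For the upper bound $u_c(s)<\infty$, I would use the non-reversible random interlacements reformulation announced in the abstract: the space-time traces $\{(\eta_t^j,t):t\geq 0, j\geq 1\}$ of the traps form a random interlacements process on $\ZZ^d\times\NN$ of level $u$, whose occupied set is exactly $\{\xi=1\}$. I would then set up a cascading multi-scale renormalization. Fix geometrically growing scales $L_n$, and call a scale-$n$ space-time box \emph{bad} if it contains a piece of adapted escape sub-trajectory of mean speed $\leq s$ that avoids the interlacements inside the box. A bad event at scale $n$ is covered by the intersection of two bad events at scale $n-1$ localized in well-separated sub-boxes; a sprinkling inequality, replacing $u$ by $u(1+2^{-n})$ at each step at an absorbed multiplicative cost, decouples them and yields a recursion of the form $P^u(\text{origin lies in a bad scale-}n\text{ box})\leq (Cq_0)^{2^n}$, with $q_0$ the base-scale probability, which is made small by taking $u$ large. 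A union bound over the exponentially many (in $T$) adapted escape trajectories of length $\leq T$ and mean speed $\leq s$ then forces $T_{\rm det}<\infty$ $P^u$-almost surely.

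The main obstacle is the construction of the sprinkling and decoupling inequalities for the non-reversible random interlacements underlying the upper bound. The standard framework of Sznitman relies crucially on reversibility of the underlying walk to build a $\sigma$-finite intensity measure on doubly infinite trajectories modulo time-shift and to symmetrise entrance/exit events via the Green function. Here the trap trajectories are directed in time and their path law is non-reversible, so the intensity measure must be constructed directly from the Poisson point process of initial positions tensorised with the forward simple random walk, and the decoupling inequalities have to be re-derived in this asymmetric setting at a quality sufficient for the cascading renormalization sketched above; this is the technical core of the paper.
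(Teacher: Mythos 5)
Your upper-bound route (non-reversible random interlacements, sprinkling, cascading renormalization) is essentially the paper's; the paper proves the stronger statement that the vacant set does not percolate at all for large level parameter (Theorem~\ref{thm:main}), which by Remark~\ref{rem:coupling} suffices, instead of taking a union bound over adapted trajectories, but the machinery you describe and your identification of the non-reversible decoupling inequalities as the technical core are exactly right.

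For the lower bound $u_c(s)>0$ your route is genuinely different and, as sketched, has a gap. The paper needs no Peierls estimate, no oriented percolation, and no Liggett--Schonmann--Stacey domination. The observation you miss is a deterministic Lipschitz-in-time property: every trap moves exactly one step per unit of time, so a site whose entire $1$-neighborhood is vacant at time $t-1$ is automatically vacant at time $t$. Consequently $C_3(t-1)^\circ$, the interior of the infinite cluster of vacant $3\times3$ boxes at time $t-1$, is an infinite connected vacant set at time $t$, and likewise the $3$-boxes inside $C_9(t-1)^\circ$ lie in $C_3(t)$. This gives immediately an adapted (even previsible) strategy with no renormalization: at each time slide inside $C_3(t-1)^\circ$ to the center of the nearest $9$-box of $C_9(t-1)$, and Lemma~\ref{lm:percolation-component} bounds the expected displacement by the expected size of the finite component of $C_9(t-1)^c$ containing the current position, which tends to $0$ as $u\to0$. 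Your LSS step, by contrast, needs ``finite-range temporal dependence'', which is false here: $\xi(x,t)$ and $\xi(x,t')$ are correlated with only polynomial decay in $|t-t'|$, so space-time blocks stacked in time at a fixed spatial location are not $k$-dependent for any finite $k$; rescuing this would force you to invoke the very one-step-per-time Lipschitz property the paper uses, at which point the heavier machinery is superfluous. Also, ``inserting idle waiting steps'' does not on its own give $E^u[S_t\mid T_{\rm det}=\infty]\le s$ for each fixed $t$ when $s$ is small, since $S_t$ counts moves made precisely at time $t$; you need, as the paper obtains from Lemma~\ref{lm:percolation-component}, a quantitative bound on the expected displacement at each individual time that vanishes with $u$.
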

\begin{remark}
  Notice that the allowed percolation path is time-adapted, which is not a usual condition in
  percolation theory. In the proof of Theorem \ref{thm:perc} we show in fact that the escape
  strategy can be chosen to be previsible, that is \( \rho_k\in \mathcal{F}_{t_k-1} \) for all
  \( k \). It can even be chosen to satisfy \( \rho_k\in \mathcal{F}_{t_k-m} \), with
  \( m \in \mathsf{N} \) as big as desired. In exchange we need to have an infinite range vision at
  each time as the escape strategy constructed in the proof of Theorem \Ref{thm:perc} relies on
  knowing the position of the infinite cluster formed by the vacant set left by the traps at each
  time.
\end{remark}
\begin{lemma}
  \label{lm:percolation-component}
  Let \( \mathcal{C} \) be the infinite open cluster in supercritical Bernoulli percolation in
  \( \mathsf{Z}^2 \), and let \( G \) be the connected component of \( \mathcal{C}^c \) containing
  the origin. For \( p\geq 8/9 \),
  \begin{equation*}
    E\,\vert G \vert \leq \alpha(1-p),
  \end{equation*}
  where \( \alpha \) is some positive constant.
\end{lemma}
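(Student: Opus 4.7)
The plan is to control $E|G|$ by a contour-counting argument exploiting the sparsity of $\mathcal{C}^c$ when $p$ is large. By Fubini,
\[
E|G| \;=\; \sum_{y \in \ZZ^2} P(y \in G) \;=\; P(0 \notin \mathcal{C}) \;+\; \sum_{y \ne 0} P(0 \leftrightarrow y \text{ in } \mathcal{C}^c),
\]
and it suffices to bound each piece by $O(1-p)$.

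For the first piece I would apply a standard Peierls estimate: $0 \notin \mathcal{C}$ forces either $0$ to be closed (probability $1-p$) or $0$ to be open but enclosed by a $*$-connected circuit of closed sites, which has probability at most $\sum_{k \ge 4} k \cdot 7^{k-1}(1-p)^k$ by the usual contour count. The hypothesis $p \ge 8/9$ gives $7(1-p) \le 7/9 < 1$, so this series converges geometrically with leading term of order $(1-p)^4$, yielding $P(0 \notin \mathcal{C}) \le C_1(1-p)$.

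For the second piece, each $y \in G \setminus \{0\}$ forces a self-avoiding nearest-neighbor path $\gamma = (z_0, \dots, z_n)$ from $0$ to $y$ with $z_i \in \mathcal{C}^c$ for all $i$. A union bound, together with the standard bound $4 \cdot 3^{n-1}$ on the number of self-avoiding paths of length $n$ starting at $0$, gives
\[
\sum_{y \ne 0} P(0 \leftrightarrow y \text{ in } \mathcal{C}^c) \;\le\; \sum_{n \ge 1} 4 \cdot 3^{n-1} \max_{|\gamma| = n} P(\gamma \subset \mathcal{C}^c).
\]
Provided one has the per-path bound $P(\gamma \subset \mathcal{C}^c) \le (C_2(1-p))^n$, the condition $3 C_2(1-p) < 1$ at $p \ge 8/9$ (which holds for any $C_2 \le 3$) sums the geometric series to $O(1-p)$, completing the estimate.

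The main obstacle is this per-path estimate: the events $\{z_i \in \mathcal{C}^c\}$ are decreasing in the underlying configuration and positively correlated by FKG, so no direct independence bound is available. I would handle it by partitioning each event as $\{z_i \in \mathcal{C}^c\} = \{z_i \text{ closed}\} \sqcup \{z_i \text{ open in a finite cluster}\}$ and summing over the subset $S \subseteq \gamma$ of sites falling in the first alternative: on $S$ the closed events are independent Bernoulli with product $(1-p)^{|S|}$, while each $z \in \gamma \setminus S$ demands its own $*$-connected closed Peierls contour of cost $O((1-p)^4)$, with combinatorial factors and overcounting from shared contours absorbed into $C_2$. This yields a uniform factor $C_2(1-p)$ per site on $\gamma$ and closes the argument.
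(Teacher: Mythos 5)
Your proposal has a genuine gap, and it is precisely where you flag the ``main obstacle.'' The per-path bound
\[
P(\gamma \subset \mathcal{C}^c) \le \big(C_2(1-p)\big)^n
\]
that your union bound needs is \emph{false}, and the patch you sketch cannot rescue it. To see this, let $n = m^2$ and take $\gamma$ to be a Hamiltonian path through an $m\times m$ square. The event that the entire $l_\infty$-boundary of that square is closed already forces $\gamma \subset \mathcal{C}^c$, so
\[
P(\gamma \subset \mathcal{C}^c) \;\ge\; (1-p)^{\,8m + O(1)} \;=\; (1-p)^{\,8\sqrt{n} + O(1)},
\]
which, for $p$ close to $1$ and $n$ large, is enormously larger than $(C_2(1-p))^n$ for any fixed $C_2$. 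The reason the ``partition $\gamma$ into closed sites $S$ and contoured sites $\gamma\setminus S$, charge each contoured site $O((1-p)^4)$'' idea fails is that contoured sites do not each pay their own contour: a single closed $*$-circuit of perimeter $O(\sqrt{k})$ suffices to enclose $k$ contoured sites, so the true cost scales like $(1-p)^{\mathrm{perimeter}}$, not $(1-p)^{4k}$. This is not an overcounting constant that can be absorbed into $C_2$; it changes the exponent from $n$ to $\sqrt{n}$, and then the $3^n$ path-counting factor kills the sum.

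The paper avoids this trap by never union-bounding over the $3^n$ paths through $G$. Instead it works with the inner boundary $\partial^i G$, observing that (i) every site of $\partial^i G$ is closed (a site of $G$ with an open neighbor in $\mathcal{C}$ must itself be closed, else it would join $\mathcal{C}$), hence carries an honest independent factor $(1-p)$; (ii) $\partial^i G$ is $*$-connected; and (iii) by isoperimetry, $|G|=n$ forces $\partial^i G$ to contain a $*$-path of length at least $\lfloor\sqrt{n}/2\rfloor$. The union bound is then over $*$-paths of \emph{closed} sites of length $\lfloor\sqrt{n}/2\rfloor$, with at most $4n^2$ starting points and $8^k$ continuations, and here the combinatorial growth $8^k$ is genuinely matched by the probabilistic decay $(1-p)^k$, since $8(1-p) \le 8/9 < 1$. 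In short: move the contour argument to the boundary, where the events are honestly independent and the relevant length scale is $\sqrt{n}$, not $n$.
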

\begin{proof}
  Suppose \( |G|=n \), \( n\geq 1 \). The isoperimetric inequality implies that there exists
  \( x \in \partial^iG \) with \( \lfloor\sqrt n/2\rfloor\leq |x|_\infty \leq n-1 \).  As
  \( 0 \in G \), there must also exist \( y \in \partial ^iG \) with
  \( |x-y|_\infty\geq \lfloor \sqrt n/2\rfloor \) (in case \( n=1 \), \( x=y \)). Furthermore, as
  \( \partial ^iG \) is a \( |{\,\cdot\,}|_\infty- \)connected set, there must exist a
  self-avoiding path
  \begin{equation*}
    x=x_0, x_1, \dots ,x_k=y,
  \end{equation*}
  composed of consecutive \( |{\,\cdot\,}|_\infty- \)neighbors in \( \partial ^iG \), going from
  \( x \) to \( y \), with \( k\geq \lfloor\sqrt n/2\rfloor \). The probability that such a path,
  starting at \( x \) fixed, exists, is
  \begin{equation*}
    \leq (1-p)[8(1-p)]^k\leq (1-p)[8(1-p)]^{\lfloor\sqrt n/2\rfloor}, 
  \end{equation*}
  since at each step there are 8 possible \( |{\,\cdot\,}|_\infty- \)neighbors, and
  \( 8(1-p)\leq 8/9<1 \). As \( x \) has not more than \( 4n^2 \) possible positions,
  \begin{equation*}
    P\big(\,|G|=n,\,0\in G\,\big)\leq 4n^2(1-p)[8(1-p)]^{\lfloor\sqrt n/2\rfloor}  .
  \end{equation*}
  Therefore, calling \( \alpha \) the sum below for \( p=8/9 \),
  \begin{equation*}
    E\big[\,\vert G \vert,\,0\in G\,\big] \leq (1-p)\sum_{n\geq 1}4n^3[8(1-p)]^{\lfloor\sqrt n/2\rfloor}\leq \alpha (1-p).
  \end{equation*}
\end{proof}

\begin{proof} (of Theorem \ref{thm:perc}, first half: \( 0<u_c(s) \)) Here we present the proof
  assuming \( d=2 \). The proof for \( d\geq 3 \) is the same, we leave the details to the
  reader. Fix \( s>0 \). We will show that there exists a positive \( u \) which belongs to the set
  in (\Ref{eq:lambda-critical}), which proves the assertion.  For this we need an appropriate
  adapted escape strategy which we will construct by induction on the time coordinate \( t
  \). The precise value of such \( u \) will be determined below.

  \par Note that for each \( t\geq 0 \), \( (\xi(x,t))_{x\in \mathsf{Z}^2} \) is a two dimensional Bernoulli site
  percolation configuration with probability of any site \( x \) being open
  \( P^u (\xi(x,t)=0)=e^{-u} \). Consider also, for each \( t \), the Bernoulli site percolation
  processes with sites given by \( 3\times 3 \) squares \( B(x,1) \), \( x\in 3\,\mathsf{Z}^2 \),
  and \( 9\times 9 \) squares \( B(x, 4) \), \( x\in 9\,\mathsf{Z}^2 \), with probabilities of a
  site being open \( P^u(\xi(y,t)=0 \text{ for all } y\in B(x,1))=e^{-9u} \), and
  \( P^u(\xi(y,t)=0 \text{ for all } y\in B(x,4))=e^{-81u} \), respectively. Let us refer to these
  squares as 3-squares and 9-squares. These percolation processes are not independent across time
  since traps move to neighboring sites at each time transition.

  \par Let \( u=u_0>0 \) be small enough to have percolation of 9-squares. Call \( C(t),C_3(t) \) and
  \( C_9(t) \) the unique infinite clusters of open sites and of open 3-squares and 9-squares at
  time \( t \), respectively, the three viewed as subsets of \( \mathsf{Z}^2 \). It is clear that
  \( \emptyset \neq C_9(t) \subset C_3(t) \subset C(t) \) for all \( t \) with probability 1.

  \par We now construct the escape strategy \( \rho \). First don't move:
  \( \rho_0=(x_0,t_0)=(0,0) \), \( \rho_1=(x_1,t_1)=(0,1) \). Then, at time \( t=1 \) look at
  \( C_3(t-1)=C_3(0) \): either \( x_1=0 \) is the center of a 3-square in \( C_3(0) \), or
  \( x_1 \not\in C_3(0) \).  In this last case stay at \( 0 \) forever: \( \rho_k=(0,k) \) for all
  \( k\geq 2 \). We describe the alternative case in a generic way.

  In case \( x_k \) is the center of a 3-square in \( C_3(t-1) \) at time \( t=t_k \), let
  \( (\sigma_0, \dots ,\sigma_l) \) be any shortest path of neighboring sites contained in its
  interior \( C_3(t-1)^\circ \) going from \( x_k \) to the center of the nearest 9-square in
  \( C_9(t-1) \). Such a path must exist, since \( C_9(t-1)\subset C_3(t-1) \), and
  \( C_3(t-1)^\circ \) is connected and intersects every 3-square in \( C_3(t-1) \).  Define
  \( \rho_{k+i}=(\sigma_i,t) \) for \( i=0, \dots ,l \), and \( \rho_{k+l+1}=(\sigma_l,t+1)
  \). Observe now that,
  \begin{enumerate}[(1)]
  \item \( C_3(t-1)^\circ \) is an infinite connected set. And since traps can only move to
    neighboring sites, it is a set of open sites at time \( t \).  It follows that
    \( \sigma \subset C_3(t-1)^\circ \subset C_1(t) \).  This guarantees the survival of the target
    particle at time \( t \).
  \item Similarly, the 3-squares contained in \( C_9(t-1)^\circ \) form an infinite connected set; it is
    therefore included in \( C_3(t) \).  Then, the first space position at time \( t+1 \),
    \( \sigma_l \), being the center of a 9-square in \( C_9(t-1) \), is also the center of a
    3-square in \( C_3(t) \). Thus, the construction of \( \rho \) can be continued inductively,
    and so the target survives forever.
  \end{enumerate}
  In conclusion, in the event that \( C_3(0) \) contains the origin, the escape strategy just
  described guarantees that the target particle survives forever. It follows that for
  \( u \leq u _0 \),
  \begin{equation}
    \label{eq:survival-time}
    0 \,<\, P^{u_0}(0\in C_3(0)) \,\leq\, P^u(0\in C_3(0)) \,\leq\, P^u(T_{\rm det}=\infty),
  \end{equation}
  as \( P^u(0\in C_3(0)) \) is non-increasing in \( u \). On the other hand, the speed \( S_t \) of the
  target particle at time \( t \) is either \( 0 \) or equal to the length \( l \) of \( \sigma \)
  in the above construction.  Let \( G \) be the set of 9-squares in the connected component of
  \( C_9(t-1)^c \) containing \( \sigma_0 \). Then, for
  \( u \leq \min\{u _0,\frac{1}{81}\log\frac{9}{8}\} \),
  \begin{equation*}
    \begin{split}
      E^u[\,S_t,\, T_{\rm det}=\infty\,]
      &\leq  E^u[\,S_t,G=\emptyset \,] +
        E^u[\,S_t, G \neq \emptyset,0\in C_3(0)\,] +
        E^u[\,S_t, 0 \not\in C_3(0)\,] \\
      &\leq 81\alpha (1-e^{-81u}) + 9P^u(\,G \neq \emptyset\,),
    \end{split}
  \end{equation*}
  where we used Lemma \Ref{lm:percolation-component} and the fact that \( S_t=0 \) in case \( G \)
  is empty or \( 0 \not\in C_3(0) \), and otherwise \( S_t \) is not larger than \( 81|G|+9 \);
  notice that if \( \sigma_0\in C_9(t-1) \), the target doesn't move, for it is already placed at
  the center of a 9-square in \( C_9(t-1) \). Finally, as \( P^u (G=\emptyset) \leq E^u |G| \),
  using (\Ref{eq:survival-time}) we get
  \begin{equation*}
    E^u[\,S_t\mid T_{\rm det}=\infty\,] \leq \frac{90\alpha(1-e^{-81u})}{P^{u_0}(0\in C_3(0))},
  \end{equation*}
  which can be made as small as desired taking \( u \) small enough.  The existence of a positive
  \( u \) satisfying the conditions in the defining set of \( u _c(s) \) is proved, and thus also
  the theorem.
\end{proof}
\begin{remark}
  \label{rem:coupling}
  If we drop the condition of the escape strategy being adapted in the definition
  (\Ref{eq:lambda-critical}) of \( u_c(s) \) its value might increase, call it
  \( \tilde{u}_c(s) \). Later on we construct again the trap environment in the context of random
  interlacements via a coupling (see \Ref{eq:random-interlacements}). That construction makes it
  clear that the set of escape strategies in the definition of \( u_c(s) \), or
  \( \tilde{u}_c(s) \) for that matter, is decreasing in \( u \). It is also clear that it is
  increasing in \( s \). It follows that in order to prove the second half of Theorem
  \Ref{thm:perc} it is enough to prove that there exits a \( u<\infty \) such that no escape
  strategy, adapted or not, exists with \( P^u (T_{\rm det}=\infty)>0 \). In Theorem \Ref{thm:main}
  we prove something stronger in fact, that there is no usual percolation of the vacant set left by
  the traps.
\end{remark}

\section{Random Interlacements formulation}

\par We now formulate the escape problem in the framework of Random Interlacements, which allow us
to prove the second half of Theorem \Ref{thm:perc} by using the heavy renormalization machinery
already developed in the context of Random Interlacements. The Random Interlacements process is a
random subset of \( \mathsf{Z}^d \) formed by a collection of random walks randomly located in
space as points of a Poisson point process. We need to define a modification of the Random
Interlacements process where the random walks are not symmetric but have a preferred direction of
motion. The original Random Interlacements model was generalized to include random walks on
transient weighted graphs \cite{teixeira2009interlacement} but the corresponding random walk is
reversible, which ours is not.

\par We need to define first the set of upward jumps
\begin{equation*}
  J = J(d)= \{(a,1): |a|_1=1, a\in \mathsf{Z}^{d-1}\},
\end{equation*}
and allude to the negatives of elements in \(J\) as downward jumps. And while we're at it, we also
agree to refer to the last coordinate as the vertical coordinate or the time dimension.  Define now
the space of infinite upward paths
\begin{equation*}
  W=\{w:\mathsf{Z}\to \mathsf{Z}^d\;:\;w(n+1)=w(n)+e,\, e\in J,\,n\in \mathsf{Z}\},
\end{equation*}
and similarly upward and downward paths starting at time 0
\begin{equation*}
  W_{\pm}=\{w:\mathsf{N}\to \mathsf{Z}^d\;:\;w(n+1)=w(n)\pm e, \,e\in J,\,
  n\in \mathsf{N}\}.
\end{equation*}
Endow this sets with the product sigma algebra, denoted by \(\mathcal{W}\) and \(\mathcal{W}_\pm\)
respectively, generated by the coordinate random variables \(X_n\), for \( n\in \mathsf{Z} \) or
\( n\in \mathsf{N} \), respectively.

\par Given \(x\in \mathsf{Z}^d\), \( d\geq 3 \), define the upward random walk starting at \(x\)
with Markovian law \(P_x^+\) on \(W_+\) satisfying \(P_x^+(X_0=x)=1\) and
\(P_x^+(X_{n+1}=z\vert X_{n}=y)=p(y,z)\), where the transition probabilities are
\begin{equation*}
  p(y,z)=\left\{ 
    \begin{array}{l@{\quad:\quad}l}
      \frac{1}{2(d-1)} & z=y+e, e\in J\\
      0 & \text{otherwise.} 
    \end{array}\right.
\end{equation*}
Similarly, on \(W_-\) we define the downward paths law \(P^-_x\) by changing the transition
probabilities to
\begin{equation*}
  p(y,z)=\left\{ 
    \begin{array}{l@{\quad:\quad}l}
      \frac{1}{2(d-1)} & z=y-e, e\in J\\
      0 & \text{otherwise.} 
    \end{array}\right.
\end{equation*}

\par Let's define also the hitting time of \( K \), \( \widetilde{H}_K=\inf\{n\ge 1: X_n\in K\} \), and the
entrance time \( {H}_K=\inf\{n\ge 0: X_n\in K\} \). On \( W \), on the other hand, we will
distinguish the paths entering a set \( K\subset \mathsf{Z}^d \) at time \( n \),
\( W_K^n=\{w\in W: H_K(w)=n\} \), where \( H_{K}=\inf\{n\in \mathsf{Z}: X_n\in K\} \) is the
entrance time of \( K \). The union \( W_K=\cup_{n\in\mathsf{Z}}W_K^n \) is the set of paths in
\( W \) entering \( K \).  The time shift operators \( \theta_k \) on \( W \) for
\( k\in \mathsf{Z} \), or on \( W_\pm \) for \( k\in \mathsf{N} \), are defined as
\( \theta_k(w)(n)=w(k+n) \). To indicate that a subset \( K \subset \mathsf{Z}^d \) is finite we
will write \( K\subset \subset \mathsf{Z}^d \).  Occasionally we will use the notation
\( X\sim Y \) to indicate that random variables \( X \) and \( Y \) have the same distribution. We
also define, for \( m \) a measure in \( \mathsf{Z}^d \), the measure
\( P_m^+=\sum_{x} m(x)P_x^+ \) defined in \( (W_+, \mathcal{W}_+) \). The distance between \( A \)
and \( B \), subsets of \( \mathsf{Z}^d \), is defined as \( d(A,B)=\min\{|x-y|_\infty :x\in A,y\in B\} \)
and the radius of \( A \) as \( \mathrm{rad}(A)=\min\{r:\exists\, B(z,r)\supset A\} \). Finally, the set of
upward path excursions from \( A \) to \( B \) is denoted as
\begin{equation}\label{eq:excursion-def}
  \begin{split}
    \mathcal{C}_{A,B}=\cup_{n\geq 1}\big\{\sigma =(\sigma_k
    &)_{0\leq k\leq n}:\, \sigma_0\in A, \sigma_n\in B, \\
    & \quad \;\sigma_k\not\in B\text{ and }\sigma_{k+1}-\sigma_k\in J\text{ for }0\leq k<n \big\}.
  \end{split}
\end{equation}
Notice that \( \mathcal{C}_{A,B} \) might be empty even if \( A \) and \( B \) are not.

\par The problem of site percolation we are interested in, has an open site in
\( x\in \mathsf{Z}^d \) if no path passes through it and a closed site if any path does. The
probability distribution of the paths appearing in a given realization is introduced right after
Theorem \ref{thm:exist}. As we need to deal with events defined by subsets of \( \mathsf{Z}^d \), let us
introduce the measurable space of those subsets, which we will also call \emph{configurations},
\begin{equation*}
  (\{0,1\}^{\mathsf{Z}^d}, \mathcal{F}), \quad  \mathcal{F} = \sigma (\Psi_x:x\in\mathsf{Z}^d),
\end{equation*}
where the canonical projection map \( \Psi_x : \{0,1\}^{\mathsf{Z}^d}\to\{0,1\}\) is defined as
\( \Psi_x(\xi)=\xi(x) \) for \( x\in \mathsf{Z}^d \). Given a configuration
\( \xi \in \{0,1\}^{\mathsf{Z}^d} \), the associated subset \( \mathcal{I}\subset \mathsf{Z}^d \)
is given by
\begin{equation*}
  \mathcal{I}=\{x\in \mathsf{Z}^d:\xi _x=1\}.
\end{equation*}
Reciprocally, \( \mathcal{I}\subset \mathsf{Z}^d \) defines the configuration given by
\( \xi =(1_{\mathcal{I}}(x))_{x\in \mathsf{Z}^d} \). An event \( G\in \sigma (\Psi_x:x\in K) \), depending
only on a finite set of coordinates \( K\subset\subset\mathsf{Z}^d \), is said to be a \emph{local
  event} with support \( K \). The space of configurations \( \{0,1\}^{\mathsf{Z}^d} \) can be
partially ordered in the following way:
\begin{equation*}
  \xi \leq \xi'\quad \text{ if } \quad  \xi (x) \leq \xi '(x) \text{ for all }x\in \mathsf{Z}^d
\end{equation*}
We say an event \( G\in \mathcal{F} \) is \emph{increasing} if \( \xi \leq \xi ' \) and
\( \xi \in G \) imply \( \xi' \in G \), and \emph{decreasing} if \( \xi \leq \xi ' \) and
\( \xi' \in G \) imply \( \xi \in G \).

\par In analogy with the case of simple symmetric random walk (see Chapter 1 of
\cite{drewitz2014introduction}), we define an ``directed'' Green function,
\begin{equation*}
  g(x,y) = E_x^+\bigg[\sum_{n\geq 0} 1_{\{X_n=y\}}\bigg] = P_x^+(H_y<\infty).
\end{equation*}
The last equality follows from the fact that \( P^+_x(X_n=y)= 0 \) for \( n\neq y_d-x_d \), the time
difference between \( x \) and \( y \). The \emph{capacity} of \( K \) is now defined as
\begin{equation*}
  \mathrm{cap}(K)=\sum_{x\in\mathsf{Z}^d} e_K(x),
\end{equation*}
where
\begin{equation*}
  e_K(x) = P_x^-\big(\widetilde{H}_K=\infty\big)1_{\{x\in K\}},
\end{equation*}
is the \emph{equilibrium measure} of \( K \). Notice its support \( \mathrm{supp}(e_K) \) is included in
\( \partial ^i(K) \), the inner boundary of \( K \). We also define the normalized equilibrium
measure
\begin{equation*}
  \tilde{e}_K(x)= \frac{e_K(x)}{\mathrm{cap}(K)}.
\end{equation*}
There is a number of properties satisfied by these functions, much in analogy to the classical
theory, and also some important differences. For example, the capacity of an ``spatial'' set is
just its cardinality
\begin{equation}
  \label{eq:spatial-set-capacity}
  \mathrm{cap}(A\times\{t\})=|A| \quad \text{for every} \quad t\in \mathsf{Z}, A\subset \mathsf{Z}^{d-1}.
\end{equation}
Later we will use the fact that capacity is time symmetric
\begin{lemma}
  \label{lm:capacity-time-reverse}
  \begin{equation*}
    \mathrm{cap}(K) = \mathrm{cap}(K\!\!\updownarrow) = \sum_{x\in K} P_x^+(\widetilde{H}_K=\infty),
  \end{equation*}
  where \( K\!\!\updownarrow =\{(x_1, \dots , x_{d-1}, -x_d): x\in K\}\) is the time reflected set of \( K \).
\end{lemma}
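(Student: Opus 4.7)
I will prove the chain of equalities $\mathrm{cap}(K) = \sum_{x\in K}P_x^+(\widetilde{H}_K=\infty) = \mathrm{cap}(K\!\!\updownarrow)$ by combining two symmetries of the transition kernel: a pathwise time reversal and a spatial time reflection. The definition $\mathrm{cap}(K)=\sum_{x\in K}P_x^-(\widetilde{H}_K=\infty)$ then matches the left-hand side automatically, so the work is in comparing $P^+$ and $P^-$ probabilities.

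For the identity $\mathrm{cap}(K)=\sum_{x\in K}P_x^+(\widetilde{H}_K=\infty)$, I will exploit the fact that $P^+$ and $P^-$ both assign uniform mass $1/(2(d-1))$ to each allowed transition. Fix $n\ge 1$ and let $\Gamma_n$ be the set of first-return upward paths $(y_0,\ldots,y_n)$ with $y_0,y_n\in K$, $y_k\notin K$ for $0<k<n$, and successive increments in $J$; each such path carries $P_{y_0}^+$-mass $(2(d-1))^{-n}$. Its reversal $(y_n,\ldots,y_0)$ has all increments in $-J$ and is hence a $P^-$ first-return path from $y_n$ of the same weight. Summing weights over $\Gamma_n$ and over $n\ge 1$, the involution $\gamma\mapsto\gamma^{\mathrm{rev}}$ yields
\begin{equation*}
  \sum_{x\in K}P_x^+(\widetilde{H}_K<\infty) \;=\; \sum_{x\in K}P_x^-(\widetilde{H}_K<\infty),
\end{equation*}
and subtracting both sides from $|K|$ together with the definition of $\mathrm{cap}(K)$ gives the desired equality.

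For the identity $\sum_{x\in K}P_x^+(\widetilde{H}_K=\infty)=\mathrm{cap}(K\!\!\updownarrow)$, I introduce the spatial time reflection $\phi:W_+\to W_-$ defined by $\phi(w)(n)=w(n)\!\!\updownarrow$. Since $J$ is invariant under negation of the spatial coordinate, an increment $(a,1)\in J$ is sent to $(a,-1)=-(-a,1)\in -J$, so $\phi(w)\in W_-$, and comparing transition kernels gives $P_x^+\circ\phi^{-1}=P_{x\!\!\updownarrow}^-$. Because $w(n)\in K$ iff $\phi(w)(n)\in K\!\!\updownarrow$, we have $\widetilde{H}_K(w)=\widetilde{H}_{K\!\!\updownarrow}(\phi(w))$, whence $P_x^+(\widetilde{H}_K=\infty)=P_{x\!\!\updownarrow}^-(\widetilde{H}_{K\!\!\updownarrow}=\infty)$. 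Summing over $x\in K$ and reindexing by $y=x\!\!\updownarrow\in K\!\!\updownarrow$ produces $\mathrm{cap}(K\!\!\updownarrow)$ directly from its definition.

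The main subtlety is that $P^{\pm}$ are transient and non-reversible, so the reversal bijection cannot be extracted from a detailed balance relation and must instead be verified by hand on the uniform path weights of each fixed length $n$. Restricting to first-return paths with both endpoints in $K$ is what keeps the total weight finite and lets me identify the two sums with $P_x^{\pm}(\widetilde{H}_K<\infty)$; allowing paths that never return to $K$ would lead to divergent sums on both sides and destroy the argument.
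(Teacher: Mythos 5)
Your argument is correct, but it takes a genuinely different route from the paper. The paper fixes the extreme time levels \( t'=\min\{x_d:x\in K\} \), \( t''=\max\{x_d:x\in K\} \), and rewrites \( \sum_{x\in K}P_x^-(\widetilde{H}_K=\infty) \) (resp.\ \( \sum_{x\in K}P_x^+(\widetilde{H}_K=\infty) \)) as the total weight \( (2(d-1))^{-(t''-t')} \) of downward (resp.\ upward) crossings between the levels \( L''\) and \( L' \) that meet \( K \) — decomposing each crossing at its last visit to \( K \) and summing out the free portion — and then notes that the two crossing families are reversals of one another with identical weights. You instead reverse only the finite first-return excursions with both endpoints in \( K \), obtaining \( \sum_{x\in K}P_x^+(\widetilde{H}_K<\infty)=\sum_{x\in K}P_x^-(\widetilde{H}_K<\infty) \), and pass to complements using \( |K|<\infty \); this avoids the level-set extension bookkeeping entirely and is arguably more elementary, while resting on the same underlying symmetry that every allowed transition has weight \( 1/(2(d-1)) \), so a path and its reversal are equiprobable. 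You also make explicit the reflection step, pushing \( P_x^+ \) forward under \( \phi(w)(n)=w(n)\!\!\updownarrow \) to \( P_{x\updownarrow}^- \) to get \( \mathrm{cap}(K\!\!\updownarrow)=\sum_{x\in K}P_x^+(\widetilde{H}_K=\infty) \), a point the paper leaves implicit. Two minor remarks: the complementation step requires \( K \) finite (as does the paper's proof, via \( t',t'' \)), which is the standing assumption \( K\subset\subset\mathsf{Z}^d \) wherever the lemma is used; and your phrase that the restriction to first-return paths ``keeps the total weight finite'' is better justified by noting each sum is bounded by \( |K| \), since finiteness is what licenses the subtraction from \( |K| \).
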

\begin{proof}
  Let \( t'=\min\{x_d: x\in K\} \), \( t''=\max\{x_d: x\in K\} \). Similarly define the time level sets
  \( L'=\{t'\}\times \mathsf{Z}^{d-1} \), \( L''=\{t''\}\times \mathsf{Z}^{d-1} \). Also for
  \( A,B\in \mathsf{Z}^d \) define the set of upward (downward) paths going from \( A \) to \( B \)
  as
  \begin{equation*}
    \begin{split}
      \mathcal{E}_{A,B}^\pm=\cup_{n\geq 1}\big\{\sigma =(\sigma_k)_{0\leq k\leq n}&\,:\,\sigma_0\in A,\, \sigma_n\in B,\\
        \sigma_{k+1}&\not\in A, \,\sigma_k\not\in B \text{ and } \pm(\sigma_{k+1}-\sigma_k)\in J \text{ for } 0\leq k\leq n\,\big\}
    \end{split}
  \end{equation*}
  We have
  \begin{align*}
    \mathrm{cap}(K) &= \sum_{x\in K}P_x^-\big(\widetilde{H}_K=\infty\big)\\
    &= \sum_{x\in K}\;\sum_{\sigma \in \mathcal{E}^-_{\{x\},L'}} \left(\frac{1}{2(d-1)}\right)^{x_d-t'}
    = \sum_{\sigma \in \mathcal{E}^-_{L'',L'},\; \sigma \cap K\neq\emptyset } \left(\frac{1}{2(d-1)}\right)^{t''-t'}.
  \end{align*}
  In a similar way
  \begin{equation*}
    \sum_{x\in K}P_x^+\big(\widetilde{H}_K=\infty\big) = \sum_{\sigma \in \mathcal{E}^+_{L',L''},\; \sigma \cap K\neq\emptyset } \left(\frac{1}{2(d-1)}\right)^{t''-t'}.
  \end{equation*}
  The sums on the right in both last expressions are equal. This proves the lemma.
\end{proof}
We will also need the following direct application of the Local Central Limit Theorem.
\begin{lemma}
  \label{lm:Green-LCLT}
  There exists \( c_d=c(d)>0 \) such that for \( x,y \) with \( y_d>x_d \),
  \begin{equation*}
    g(x,y)\leq c_d\,(y_d -x_d)^{-\frac{{d-1}}{2}}.
  \end{equation*}
\end{lemma}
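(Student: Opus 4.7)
The plan is to reduce the statement to the standard Local Central Limit Theorem for simple symmetric random walk in dimension $d-1$, which is the only nontrivial input.

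First I would observe that the upward random walk only moves up one time-coordinate per step: if $X_0 = x$ under $P_x^+$, then the $d$-th coordinate of $X_n$ is $x_d + n$ almost surely. Hence $P_x^+(X_n = y) = 0$ unless $n = y_d - x_d$, and therefore the sum defining $g(x,y)$ collapses to a single term:
\begin{equation*}
  g(x,y) \;=\; P_x^+(X_{y_d - x_d} = y).
\end{equation*}

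Next I would decompose $X_n = (Y_n, x_d + n)$, where $Y_n$ records the first $d-1$ coordinates. The transition law $p(y,z) = \frac{1}{2(d-1)}\mathbb{1}_{z-y \in J}$ means that, conditionally on the deterministic vertical increment, $Y_n$ is a simple symmetric random walk on $\mathbb{Z}^{d-1}$ starting from $(x_1,\dots,x_{d-1})$. Writing $n = y_d - x_d$ and $z = (y_1 - x_1,\dots,y_{d-1}-x_{d-1})$,
\begin{equation*}
  g(x,y) \;=\; P_0^{\mathrm{SSRW}}\bigl(Y_n = z\bigr),
\end{equation*}
where $P_0^{\mathrm{SSRW}}$ denotes the law of SSRW on $\mathbb{Z}^{d-1}$ started at $0$.

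Finally I would invoke the Local Central Limit Theorem for the $(d-1)$-dimensional simple symmetric random walk, which provides a constant $c_d > 0$, depending only on $d$, such that $P_0^{\mathrm{SSRW}}(Y_n = z) \leq c_d\, n^{-(d-1)/2}$ uniformly in $z \in \mathbb{Z}^{d-1}$. Substituting yields the claim. Because $d \geq 3$ (as in the definition of $P_x^\pm$), the projected walk lives in dimension $d-1 \geq 2$ so the LCLT bound is genuine. There is no real obstacle here; the only point to watch is the parity/periodicity of SSRW, but that is harmless since we only need an upper bound and the LCLT asserts the inequality uniformly, absorbing the zero-probability lattice constraint into the constant.
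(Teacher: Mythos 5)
Your argument is correct and is exactly what the paper has in mind: the paper offers no written proof, declaring the lemma ``a direct application of the Local Central Limit Theorem,'' and your reduction---deterministic vertical increment collapses the Green function sum to a single term, the horizontal projection is an SSRW on \( \mathsf{Z}^{d-1} \), and the uniform LCLT upper bound gives the \( n^{-(d-1)/2} \) decay---is precisely that application. Your remark about absorbing the parity constraint into the constant, since only an upper bound is needed, is the right way to dispose of the one potential subtlety.
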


\par We want to study the sets defined by the traces of infinite paths in \( W \) appearing as points
of a Poisson point process. To do this properly we introduce an equivalence of paths in \( W \):
\begin{equation*} 
  w\sim w' \quad\text{if}\quad w({\,\cdot\,})=w'(k+{\,\cdot\,}), \;\text{ for some
  } k \in \mathsf{Z}.
\end{equation*}
Call \( W^{*}=W/\sim \) the quotient set, and \( \pi:W\to W^{*} \) the canonical projection. The
\( \sigma \)-algebra \( \mathcal{W}^{*}=\{A\subset W^{*}: \pi^{-1}(A)\in \mathcal{W}\} \) will
allow us to define on \( W^{*} \) the measures we need. Furthermore denote by
\( W_K^{*}= \pi(W_K)=\pi(W_K^n) \), \( n\in\mathsf{Z} \), the set of equivalence classes of paths
entering \( K \).

\par For \( K\subset\subset \mathsf{Z}^d \) let us define the measure \( Q_K \) on \( W \) by the following condition. For
any \( A\in \mathcal{W}_{-} \), \( B\in W_{+} \) and \( x\in K \),
\begin{equation}
  \label{eq:def-Q}
  Q_K\big[(X_{-n})_{n\ge0}\in A, X_0=x, (X_n)_{n\ge0}\in B\big]
  =P^-_x\big(A,\widetilde{H}_K=\infty\big)P_x^+(B).
\end{equation}
Notice it is supported on \(W_K^0\), with total mass
\begin{equation*}
  \sum_{x\in K}Q_K(X_0=x)=\sum_{x\in K}e_K(x)= \mathrm{cap}(K).
\end{equation*}
\begin{theorem}
  \label{thm:exist}
  There is a unique measure \(\nu\) on \((W^{*},\mathcal{W}^{*})\) such that for any
  \( K\subset\subset \mathsf{Z}^d \)
  \begin{equation*}
    1_{W_K^{*}}\nu = Q_K\pi^{-1}.
  \end{equation*}
\end{theorem}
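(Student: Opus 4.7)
The plan is to construct \(\nu\) as the monotone limit of the finite measures \(Q_{K_n}\pi^{-1}\) along an exhaustion \(K_1\subset K_2\subset\cdots\) of \(\mathsf{Z}^d\) by finite sets. The engine of the proof is a compatibility identity: for every finite \(K\subset K'\subset \mathsf{Z}^d\),
\begin{equation*}
    1_{W^*_K}\bigl(Q_{K'}\pi^{-1}\bigr)\;=\;Q_K\pi^{-1}.
\end{equation*}
Assuming this, uniqueness is immediate since every class in \(W^*\) lies in some \(W^*_{K_n}\), so any two candidate measures that agree on each \(W^*_{K_n}\) must coincide. For existence, compatibility makes \(\nu_n:=Q_{K_n}\pi^{-1}\) (extended by zero outside \(W^*_{K_n}\)) a non-decreasing sequence on \((W^*,\mathcal{W}^*)\), and the supremum \(\nu:=\sup_n\nu_n\) is the required \(\sigma\)-finite measure, finite on each \(W^*_{K_n}\) with total mass \(\mathrm{cap}(K_n)\).

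To reduce compatibility to a concrete path computation, I pick for each class in \(W^*_K\) the unique lift with \(H_K=0\). The identity then reads
\begin{equation*}
    \sum_{s\geq 0} Q_{K'}\bigl(H_K=s,\,\theta_s w\in F\bigr)\;=\;Q_K(F),\quad F\subset W^0_K\text{ measurable},
\end{equation*}
and by a monotone-class argument it is enough to verify it on cylinders \(F=\{X_0=x,\,(X_{-n})_{n\geq 0}\in A^-,\,(X_n)_{n\geq 0}\in B^+\}\) with \(x\in K\). The forward marginal is automatic: conditionally on \(X_s=x\), the forward part of \(\theta_s w\) under \(Q_{K'}\) has law \(P_x^+\) by the Markov property of \(P^+\), producing the factor \(P_x^+(B^+)\) already present in \(Q_K(F)\). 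The whole content of the identity thus lies in the \emph{backward} marginal of \(\theta_s w\), which, summed over \(s\geq 0\) and \(y=X_0\in K'\), should reproduce \(P_x^-(\,\cdot\,,\widetilde H_K=\infty)\) on \(A^-\).

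The backward identity is established via a last-visit decomposition. Since \(P_x^-\) is transient in the time coordinate and \(K'\) is finite, \(P_x^-\)-almost surely the walk meets \(K'\) only finitely often, hence
\begin{equation*}
    \{\widetilde H_K=\infty\}\;=\;\{\widetilde H_{K'}=\infty\}\;\sqcup\;\bigsqcup_{s\geq 1}\bigsqcup_{y\in K'\setminus K}\{\widetilde H_K=\infty,\,\text{last }K'\text{-visit at time }s,\,X_s=y\}.
\end{equation*}
By the Markov property at time \(s\), the \((s,y)\)-piece factors into an initial segment of \(s\) downward steps from \(x\) to \(y\) avoiding \(K\) (each such segment carrying weight \((2(d-1))^{-s}\)), times a \(P_y^-\)-tail restricted to \(\{\widetilde H_{K'}=\infty\}\). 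On the \(Q_{K'}\)-side, a path with \(H_K=s,\,X_0=y,\,X_s=x\) is an upward excursion from \(y\) to \(x\) of length \(s\) avoiding \(K\) in its interior, paired with exactly the same kind of backward \(P_y^-\)-tail. Time reversal is a weight-preserving bijection between those upward excursions and the downward segments above, because under the respective kernels the upward step from \(y\) to \(y+e\) and the downward step from \(y+e\) to \(y\) both carry probability \((2(d-1))^{-1}\). Matching the two decompositions term by term in \((s,y)\) gives the backward identity, and with it the compatibility lemma.

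The main obstacle is precisely this compatibility lemma. The non-reversibility of \(P^\pm\) forbids the symmetry argument used in Sznitman's original construction; its substitute rests on the time-reversal relation between the two \emph{distinct} path laws \(P^+\) and \(P^-\), combined with the clean last-visit decomposition above. Everything else, including the monotone extension of the \(\nu_n\) to \(\nu\) and the uniqueness assertion, follows routinely from the compatibility identity.
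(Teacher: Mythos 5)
Your proposal is correct and takes essentially the same route as the paper: both reduce Theorem~\ref{thm:exist} to the compatibility identity \(1_{W^*_K}(Q_{K'}\pi^{-1})=Q_K\pi^{-1}\) for \(K\subset K'\), verify it on cylinders by decomposing according to the excursion from \(K'\) into \(K\) (your last-visit-to-\(K'\) decomposition of the backward walk is exactly the paper's parametrization by \(\sigma\in\mathcal{C}_{K',K}\) with \(H_{K'}=-l\)), and replace Sznitman's reversibility with the time-reversal duality \(P_x^+(X_0^l=\sigma)=P_y^-(X_0^l=-\sigma)\) between the two laws \(P^+\) and \(P^-\). The paper fixes the whole excursion \(\sigma\) in each summand whereas you fix only its length and endpoints and invoke a path-by-path weight-preserving bijection, but this is a cosmetic repackaging of the same computation.
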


\par We can now construct, following the general scheme presented in \cite{sznitman2010vacant}, the upward
paths random interlacements process we are interested in. Let \(\Omega\) be the set of locally
finite point measures on \( W^*\times \mathsf{R}_+ \) that is, measures of the form
\begin{equation*} 
  \omega = \sum_{n\ge0}\delta_{(w_n^*,u_n)}, \quad w_n^*\in W^*,u_n\in \mathsf{R}_+,
\end{equation*}
satisfying \( \omega(W^*_K\times[0,u])<\infty \) for all \( K\subset\subset\mathsf{Z}^d \),
\( u>0 \), and let \( \mathcal{A} \) be the canonical \( \sigma \)-algebra on \( \Omega \)
generated by the evaluation maps \( \omega\mapsto\omega(D) \),
\( D\in \mathcal{W}^*\otimes\mathcal{B}_{\mathsf{R}_+} \).  \\ We now define on
\( (\Omega,\mathcal{A}) \) a probability measure, call it \( \mathbb{P} \), making the random
element
\begin{equation}
  \label{eq:ppp}
  \omega=\sum_{n\ge0}\delta_{(w_n^*,u_n)}
\end{equation}
a Poisson point process with intensity measure \( \nu\otimes\lambda \). The \emph{random interlacement} at level
\(u > 0\) is now defined as the random variable (random set)
\(\mathcal{I}^u:\Omega\to 2^{\mathsf{Z}^d}\) given by
\begin{align} 
  \label{eq:random-interlacements}
  \mathcal{I}^u(\omega)=\bigcup_{u_n\le u}\mathrm{range}(w_n^*),\quad\text{ for
  }\ \omega=\sum_{n\ge0}\delta_{(w_n^*,u_n)}\in \Omega,
\end{align}
and the associated \emph{vacant set} is just its complement
\( \mathcal{V}^u=\mathsf{Z}^d \setminus \mathcal{I}^u \); here
\( \mathrm{range}(w^*)=\{X_n(w): w\in \pi ^{-1}(w^*), \,n\in \mathrm{Z}\} \) is the set of sites
visited by \( w^* \).

\par As \( \omega(W^*_K\times[0,u]) \) is a Poisson random variable with parameter
\begin{equation*} 
  \mathbb{E}[\omega(W^*_K\times[0,u])]\,=\,(\nu\otimes\lambda)(W^*_K\times[0,u])\,=\,u\, \mathrm{cap}(K),
\end{equation*}
we say that the expected number of interlacement paths passing through \(K\) at level \(u\) is
\(u\,\mathrm{cap}(K)\). It follows that
\begin{equation*} 
  \mathbb{P}[\mathcal{I}^u \cap K=\emptyset ] = \mathbb{P}[\omega(W^*_K\times[0,u])=0]\,=\,e^{-u\,\mathrm{cap}(K)}.
\end{equation*}

\par For \( A_1, \dots ,A_j \subset \subset \mathsf{Z}^{d-1}\times \{0\} \) disjoint,
\( W^*_{A_1}, \dots ,W^*_{A_j} \) are also disjoint. Then the variables equal to the number
\( \omega(W^*_{A_i}\times[0,u]) \) of directed interlacement paths passing through \( A_i \), for
\( i=1, 2, \dots \), is a set of independent Poisson random variables with parameters
\( u \,\mathrm{cap}(A_i)=u|A_i| \), see (\Ref{eq:spatial-set-capacity}). We conclude that the
directed random interlacements model just defined is the negative time extension of the space-time
trap model introduced in Section \Ref{sec:introduction}; the level parameter \( u \) here appeared
as the intensity there. Notice that we have simultaneously defined all random sets
\( \mathcal{I}^u \) and \( \mathcal{V}^u \) for different \( u\in [0,\infty[\) via a coupling,
which implies that for \( u\leq u' \) we have \( \mathcal{I}^u \subset \mathcal{I}^{u'} \)
\( \mathbb{P} \)-almost surely (see Remark \Ref{rem:coupling}). This will be a central feature for
the sprinkling technique we use in order to prove of our phase transition result. Observe that the
occupation events \( A\in \mathcal{F} \), defined before, have a probability
\( \mathbb{P}(\mathcal{I}^u\in A) \) that depends on the level parameter \( u\geq 0 \) used.

\par The next theorem proves the second half of Theorem \Ref{thm:perc} by showing that for large
enough values of the level parameter there is no percolation of the vacant set of upward path
interlacements. To state it we first introduce some notation. Given \( x,y \in \mathsf{Z}^d \),
\( x \overset{\mathcal{V}^u}{\longleftrightarrow} y \) represents the event that
\( \mathcal{V}^u \) contains a path from \( x \) to \( y \) in the lattice, that is, the set
\( \mathcal{V}^u \) contains a sequence of neighboring sites \(x_0\sim x_1 \sim \cdots \sim x_n \)
such that \( x_0=x \) and \( x_n=y \). Define the probability of percolation
\begin{equation*}
  \theta(u)= \mathbb{P}(0\overset{\mathcal{V}^u}{\longleftrightarrow}\infty),
\end{equation*}
where the \emph{percolation event} \( \{0\overset{\mathcal{V}^u}{\longleftrightarrow}\infty\} \) is defined as
\(\cap_{n\geq 1}\{\exists \,x \in \mathsf{Z}^d:|x|_\infty \geq n,\,
0\overset{\mathcal{V}^u}{\longleftrightarrow}x\}\).

\par Since \( \mathcal{V}^{u'} \subset \mathcal{V}^u\) for \( u\leq u' \), \( \theta(u) \) is a decreasing function of
\( u \). This implies that, in order to prove the theorem stated next we just need to show that
there exists a finite value of \( u \) for which \( \theta(u) \) is zero.

\begin{theorem}
  \label{thm:main}
  There exists a positive value \(u_c\) such that
  \begin{equation*}
    \theta (u) = 0 \,\text{ for }\,u>u_c.
  \end{equation*}
\end{theorem}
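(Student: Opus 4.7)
The plan is a multi-scale renormalization with sprinkling, in the spirit of Sznitman's proof of the supercritical regime for the vacant set of classical random interlacements, adapted to the directed (non-reversible) setting developed above. Since $\theta$ is nonincreasing in $u$ by the coupling (\ref{eq:random-interlacements}), it is enough to exhibit a single finite $u^{*}$ with $\theta(u^{*})=0$. I would work with a geometric cascade of scales $L_n=\ell_0\ell^n$, boxes $B_n\subset\ZZ^d$ of side $L_n$ centered at the origin, and the decreasing events
\begin{equation*}
A_n^u=\bigl\{\,B_0\overset{\mathcal{V}^u}{\longleftrightarrow}\partial B_n\,\bigr\}.
\end{equation*}
Because percolation of $\mathcal{V}^u$ from the origin forces $A_n^u$ for every $n$, the task reduces to proving $\mathbb{P}(A_n^{u^{*}})\to 0$.

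The renormalization step combines a geometric decomposition with sprinkling. Any vacant path from $B_0$ to $\partial B_{n+1}$ must cross two disjoint sub-boxes $B_n(x_1), B_n(x_2)$ with $|x_1-x_2|_\infty\ge L_n$. Choose a decreasing sequence $u_n\searrow u_\infty>0$ with $\sum(u_n-u_{n+1})<\infty$. By (\ref{eq:random-interlacements}), $\mathcal{I}^{u_n}$ decomposes as $\mathcal{I}^{u_{n+1}}$ plus an independent Poisson cloud of interlacement paths of intensity $(u_n-u_{n+1})\nu$; this sprinkling increment is what makes the two sub-crossings conditionally almost independent. The resulting induction has the schematic form
\begin{equation*}
\mathbb{P}(A_{n+1}^{u_{n+1}})\;\le\;C\,L_n^{2(d-1)}\,\mathbb{P}(A_n^{u_n})^{2}+\varepsilon_n,
\end{equation*}
where the prefactor counts pairs of sub-boxes and $\varepsilon_n$ is an error term bounding the contribution of interlacement paths visiting both sub-boxes. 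For the base case, the identity $\mathbb{P}(K\subset\mathcal{V}^u)=e^{-u\,\mathrm{cap}(K)}$ together with a crude union bound over connecting paths inside $B_{n_0}$ makes $\mathbb{P}(A_{n_0}^{u_{n_0}})$ as small as desired once $u^{*}$ is taken large, and iterating the recursion then forces $\mathbb{P}(A_n^{u_n})\to 0$.

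The hard part is the decoupling inequality in the directed setting. In the reversible framework the cross term is controlled by comparing $\mathrm{cap}(K_1\cup K_2)$ with $\mathrm{cap}(K_1)+\mathrm{cap}(K_2)$ via reversible hitting probabilities. Here the walks are strictly upward, so an interlacement path visiting both sub-boxes must enter the lower one first and only then the upper one; symmetry between the two sub-boxes is restored only through the time-reversal identity of Lemma \ref{lm:capacity-time-reverse}. I would expand $Q_{B_n(x_1)\cup B_n(x_2)}$ according to which sub-box is entered first, using (\ref{eq:def-Q}), and bound the double-visit contribution by $\sum_{y\in B_n(x_1),\,z\in B_n(x_2)} g(y,z)$, controlled through Lemma \ref{lm:Green-LCLT}. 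This produces an estimate of the form $\varepsilon_n\le C(u_n-u_{n+1})\,L_n^{2(d-1)}\,L_n^{-(d-1)/2}$, which the tuning of $\ell$, $\ell_0$, and the sprinkling sequence $u_n$ must render small compared to the main term. The exponent $(d-1)/2$ in Lemma \ref{lm:Green-LCLT} is precisely what lets the recursion close for $d\ge 3$; once the decoupling inequality is in place, standard subadditive iteration delivers $\mathbb{P}(A_n^{u_n})\to 0$, and hence the theorem.
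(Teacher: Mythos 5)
Your high-level plan matches the paper's: monotonicity in $u$ via the coupling (\ref{eq:random-interlacements}), a multi-scale renormalization with sprinkling, the directed Green function estimate of Lemma \ref{lm:Green-LCLT}, and the time-reversal symmetry of Lemma \ref{lm:capacity-time-reverse} to handle the non-reversibility. The paper packages the renormalization in the binary-tree embedding formalism of Sznitman (Theorems \ref{thm:decorr} and \ref{thm:long-star-paths}, following Chapter 8 of \cite{drewitz2014introduction}); your box-crossing recursion $\mathbb{P}(A_{n+1}^{u_{n+1}})\le C\,(\mathrm{prefactor})\,\mathbb{P}(A_n^{u_n})^2+\varepsilon_n$ is the more elementary, equivalent version. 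That difference is cosmetic.

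The genuine gap is in your treatment of the error term $\varepsilon_n$. You propose to bound ``the contribution of interlacement paths visiting both sub-boxes'' directly by $\sum_{y,z} g(y,z)$, producing a \emph{polynomial} error of the form $C(u_n-u_{n+1})\,L_n^{3(d-1)/2}$. This is not how the sprinkling argument works, and it misidentifies the role of the Green function. In Theorem \ref{thm:coupling}, the Green function bound $g(x,y)\le c_d(y_d-x_d)^{-(d-1)/2}$ is used only to verify the hypothesis $\mathrm{cap}(S)\le \frac{\delta}{4e^4c_d}L^{(d-1)/2}$, which guarantees (via the computation (\ref{eq:1-beta}) and (\ref{ec:beta/1-beta})) that the Poisson intensity of \emph{multi-excursion} paths $\zeta_-^{**}$ is a small constant fraction of the intensity of the sprinkled single excursions $\zeta^*_{-,+}$. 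The actual error in the decoupling inequality is the \emph{coupling failure} probability — a Poisson tail — and it is exponentially small: $\varepsilon = 2e^{-\mathrm{cap}(S)(u_+-u_-)/20}$, yielding $\epsilon(u,n)=2\exp(-2u(n+1)^{-3}L_n^{(d-1)/2}l_0^{1/2})$ in (\ref{eq:delta-error}). Your linear-in-$(u_n-u_{n+1})$ estimate is not the right object: one does not bound the probability that double-visit paths \emph{exist}, but the probability that the sprinkled cloud fails to dominate them. With a merely polynomial $\varepsilon_n$ you would be forced to take $u_n-u_{n+1}$ decaying super-exponentially, which at minimum changes the structure of the argument and makes it far from clear the recursion closes; the paper needs only the polynomial decay $\delta(n,l_0)=D(n+1)^{-3/2}l_0^{-(d-2)/4}$, precisely because the error is exponential in $\mathrm{cap}(S)(u_+-u_-)$. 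To fix your proposal you would need to replace the Green-function cross-sum by the excursion-decomposition coupling of Theorem \ref{thm:coupling} (or an equivalent), where the Poisson concentration does the work and Lemma \ref{lm:Green-LCLT} merely sets the scale at which it applies.
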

\section{Proof of Theorems}

\subsection{Existence Theorem}

\begin{proof}
  (of Theorem \ref{thm:exist}) The only difficulty is proving that the condition on
  \(1_{W_K^{*}}\nu\) is not ambiguous (see \cite{drewitz2014introduction}) i.e., that in case
  \(K\subset K'\), we have \(Q_{K'}A = Q_KA\) for \(A=\pi^{-1}B\), \(B\in\mathcal{W}^{*}\) and
  \(B\subset W_K^{*}\) (\(\subset W_{K'}^{*}\) also then).

  Notice that, since \(Q_K\) is supported on \(W_K^0\) while \(Q_{K'}\) is supported on
  \(W_{K'}^0\), for each upward path \(w\in W_{K'}^0\) in the left hand side, we want on the right
  it's time translated equivalent \(\theta_{H_K}(w)\). It is enough to show that for cylinder sets
  \(A=\Pi_{n\in\ZZ}A_n\) (\(A_n\subset \ZZ^d\))
  \begin{equation}
    \label{eq:cylinders}
    Q_{K'}[H_K<\infty, X_{n+H_K}\in A_n, n\in\ZZ]=Q_K[X_n\in A_n, n\in\ZZ].
  \end{equation}
  In fact, we would then have
  \(Q_{K'}[H_K<\infty, (X_{n+H_K})_{n\in\ZZ}\in A]=Q_K[(X_n)_{n\in\ZZ}\in A]\) for all
  \(A\in\mathcal{W}\). In case \(B\subset W_K^{*}\), \(A=\pi^{-1}B\) being composed of entire
  equivalence classes, the event on the left equals \(\{(X_n)_{n\in\ZZ}\in A\}\) and the result
  follows.

  \par As (\ref{eq:cylinders}) involve only upward paths going through \(K\), let us focus our
  attention on the excursions \(\sigma\in \mathcal{C}_{K',K}\) going from \(K'\) to \(K\), see
  (\ref{eq:excursion-def}). Observe \(\mathcal{C}_{K',K}\neq \emptyset \) in this case. Assume also
  that \(\sigma(n)\in A_{n-l}\) for \(n=0,\dots,l\), \( l=l(\sigma) \) denoting its length. We have
  the following finite partitions for these paths (using the notation \(X_l^m=(X_l,\dots,X_m)\)),
  \begin{equation*}
    \begin{gathered}
      W_{K'}^0\cap\{H_K<\infty, X_{n+H_K}\in A_n,
      n\in \mathsf{Z}\}=\bigcup_{\sigma} W_{K'}^0\cap\big\{X_0^l
      =\sigma, X_{n}\in A_{n-l}, n\in \mathsf{Z}\big\},\\
      W_K^0\cap\{X_n\in A_n, n\in \mathsf{Z}\}=\bigcup_{\sigma}
      \big\{H_{K'}=-l, X_{-l}^0=\sigma, X_{n}\in A_n, n\in \mathsf{Z} \big\}.
    \end{gathered}
  \end{equation*}
  It is then enough to prove, for \(\sigma\) as above, that
  \begin{equation*}
    Q_{K'}\big[X_0^l=\sigma, X_n\in A_{n-l}, n\in \mathsf{Z}\big] = Q_K\big[H_{K'}=-l, 
    X_{-l}^0=\sigma, X_n\in A_n, n\in \mathsf{Z}\big].
  \end{equation*}
  Now, according to (\ref{eq:def-Q}), the left hand side equals
  \begin{multline*}
    \label{eq:exist-proof-2}
    Q_{K'}\big[X_0^l=\sigma, X_{n}\in A_{n-l}, n\in \mathsf{Z}\big] \\ =
    P_x^-\big\{X_n\in A_{-n-l}, n\in \mathsf{N}, \widetilde{H}_{K'}=\infty\big\}
    P_x^+\big\{X_0^l=\sigma, X_n\in A_{n-l}, n\in \mathsf{N}\big\}.
  \end{multline*}
  Clearly \(P_x^+\big\{X_0^l=\sigma\big\}=1/(2(d-1))^l =P_y^-\big\{X_0^l=-\sigma\big\}\), where we
  write \(-\sigma\) to mean \((\sigma_l,\dts,\sigma_0)\). Now apply twice the Markov property to
  get
  \begin{align*}
    &P_x^-\{X_n\in A_{-n-l}, n\in \mathsf{N},
      \widetilde{H}_{K'}=\infty\}\;P_x^+\big\{X_0^l=\sigma\big\}\;
      P_y^+\{X_n\in A_n, n\in \mathsf{N}\} \\
    = &P_x^-\{X_n\in A_{-n-l}, n\in \mathsf{N},
        \widetilde{H}_{K'}=\infty\}\;P_y^-\big\{X_0^l=-\sigma\big\}\;
        P_y^+\{X_n\in A_n, n\in \mathsf{N}\}\\
    = &P_y^-\big\{X_0^l=-\sigma, X_n\in A_{-n}, n\in \mathsf{N},
        \widetilde{H}_{K'}\circ\theta_l=\infty\big\}\; P_y^+\{X_n\in A_n, n\in
        \mathsf{N}\}.
  \end{align*}
  As \(\widetilde{H}_{K}=\infty\) can be included due to the fact that \(-\sigma\) doesn't return
  to \(K\), this equals
  \begin{multline*}
    P_y^-\big\{X_0^l=-\sigma, X_n\in A_{-n}, n\in \mathsf{N},
    \widetilde{H}_{K'}\circ\theta_l=\infty,
    \widetilde{H}_K=\infty\big\}\; P_y^+\{X_n\in A_n, n\in \mathsf{N}\}\\
    = Q_K\big[H_{K'}=-l, X_{-l}^0=\sigma, X_n\in A_n, n\in \mathsf{Z}\big].
  \end{multline*}
\end{proof}

\subsection{Decorrelation Inequalities}

We now introduce the decorrelation inequalities, which serve as the fundamental component in the
implementation of the previously mentioned renormalization scheme.

The key step in the pursuit of this goal is the comparison, through coupling, of variables with
different level parameters \( u_- \) and \( u_+ \), the so called sprinkling technique, Theorem
\Ref{thm:coupling}. Let \( S_i \) and \( U_i \), \( i=1,2 \) be subsets of \( \mathsf{Z}^d \) such
that
\begin{equation}
  \label{eq:S-U}
  \begin{gathered}
    \emptyset \neq S_i\subset U_i\subset \subset \mathsf{Z}^d, \;\;\; i=1,2 \quad \text{ and } \quad  U_1\cap U_2=\emptyset, \\
    \text{and let }\quad  S=S_1\cup S_2, \quad  U=U_1\cup U_2.
  \end{gathered}
\end{equation}
We will define by restriction and composition, starting from the random interlacements point
process (\ref{eq:ppp}), \( \omega = \sum_{n\geq 0} \delta_{(w^*_n,u_n)} \), a succession of point
processes that will allow us to look in detail at the upward path occupation events we are
interested in.

\par We want to look at the paths of the interlacement process when they are in \( S \). The idea is
to focus attention on \( U \), which contain \( S_1 \) and \( S_2 \), and show that when this sets
are placed far apart from each other \( U \) can be large, and then use this to show that the
occupation events by \( \mathcal{I}^u \) of \( S_1 \) and \( S_2 \) are weakly correlated.  We
start by defining the Poisson point process on \( W_+ \)
\begin{equation*}
  \mu _{S,u}(\omega) = \sum_{n\geq 0}\delta_{\tau_S(w^*_n)_+}1_{\{w^*_n\in W^*_S,u_n\leq u\}}, \quad \text{ for } \quad \omega=\sum_{n\geq 0}\delta_{(w_n^*,u_n)},
\end{equation*}
where
\begin{equation*}
  \tau_S:W^*_S \ni w^*\mapsto w^0 \in W^0_S
\end{equation*}
chooses the representative of \( w^* \) that enters \( S \) at time \( 0 \), and
\( ({\,\cdot\,})_+:W \ni w\mapsto(w_n)_{n\in \mathsf{N}} \in W_+ \) forgets its negative time
coordinates. Using Theorem \ref{thm:exist} and the fact that \( \tau _S \) is the inverse of
\( \pi\!\mid_{W_S^0} \) we have for the image of \( \nu \) (the random walks part of the intensity
of \( \omega \)),
\begin{equation*}
  (1_{W_S^*}\nu)(\tau_S({\,\cdot\,})_+)^{-1} = \sum_{x\in S}e_S(x)P^+_x = P^+_{e_S}.
\end{equation*}
Therefore the intensity measure of \( \mu _{S,u} = \sum_{n} \delta_{w_n} \) is
\begin{equation*}
  u\,\mathrm{cap}(S)P^+_{\tilde{e}_S}.
\end{equation*}

\par Recall the definition of the entrance time to \( S \), \( H_S \), and define also the exit time
\( T_U = \inf\{ n\geq 0:X_n \not\in U\} \). We want to distinguish paths by their excursions from
the time they enter \( S \) until they exit \( U \). For this let us define recursively the
stopping times
\begin{equation*}
  \begin{gathered}
    R_1=H_S, \quad \quad D_1=T_U\circ\theta _{R_1} + R_1,  \\
    R_{k+1} = R_1\circ\theta_{D_k} + D_k, \quad \quad D_{k+1} = D_1\circ\theta _{D_k} + D_k,
  \end{gathered}
\end{equation*}
with the convention that if any of them takes the value \( +\infty \) the next does too. We now define
new Poisson point processes on the excursions of paths which perform exactly \( j\geq 1 \)
excursions from their entrance time to \( S \) until their exit time from \( U \), and for values
\( 0<u_-<u_+ \) of the level parameter
\begin{equation*}
  \begin{split}
    \zeta_\pm^j
    &= \mu_{S,u_\pm}\phi_j^{-1} \\
    &= \sum_{n} 1_{\{R_j<\infty = R_{j+1}\}}(w_n)\, \delta_{\phi_j(w_n)}, \quad  \quad \text{for} \quad \mu _{S,u_\pm}=\sum_{n}\delta _{w_n},
  \end{split}
\end{equation*}
where the function \( \phi _j \) collect the excursions made by each path,
\begin{equation*}
    \begin{split}
      & \phi_j:\{R_j<\infty = R_{j+1}\}\ni w\mapsto (w^1,\dots,w^j)\in \mathcal{C}_{S,U^c}^j,\\
      & w^k \coloneq (X_n(w))_{R_k(w)\leq n\leq D_k(w)};
  \end{split}
\end{equation*}
recall the definition of \( \mathcal{C}_{A,B} \) in (\ref{eq:excursion-def}). As
\( (\zeta_-^j, \zeta_+^j) \) have disjoint support for different values of \( j \), they are
independent. The intensity measure of \( \zeta_\pm^j \) is given by
\begin{equation}
  \label{eq:intensity-zeta-j}
  \xi_\pm^j (w^1,\dots, w^j) = u_\pm \mathrm{cap}(S)P_{\tilde{e}_S}^+\left\{ R_j<\infty = R_{j+1}, (X_{R_k},\dots,X_{D_k})=w^k, 1\leq k\leq j \right\},
\end{equation}
\( (w^1,\dots, w^j)\in \mathcal{C}_{S,U^c}^j \). Observe that \( \xi_\pm^j \) is equal to zero in many cases, for instance
when the final position of a given excursion makes it impossible for the directed random walk to do
a later excursion.

\par Finally, define the Poisson point process
\begin{equation}
  \label{eq:zeta-star-plus-minus}
  \zeta^*_{-,+}=\zeta^1_+ - \zeta^1_-,
\end{equation}
which will be used to bound the random point measure collecting all excursions from paths returning
at least once to \( S \)
\begin{equation}
  \label{eq:zeta-star-star-minus}
  \begin{split}
    \zeta ^{* *}_-
    &=\sum_{j\geq 2} s_j(\zeta^j_-) \\
    &=\sum_{n} \sum_{j\geq 2} 1_{\{R_j<\infty = R_{j+1}\}}(w_n) \sum_{k=1}^j \delta_{\phi_j(w_n)^k}, 
  \end{split}
\end{equation}
where \( s_j \) is the map taking finite point measures \( \zeta = \sum_{n} \delta_{w_n} \) on
\( \mathcal{C}_{S,U^c}^j \) to finite point measures on \( \mathcal{C}_{S,U^c} \) defined as
\begin{equation*}
  s_j(\zeta) = \sum_{n} \delta_{w_n^1} + \cdots + \delta_{w_n^j}.
\end{equation*}
Observe that \( \zeta^1_- \) and \( \zeta^*_{-,+} \) are independent since they have disjoint
support. For the same reason \( \zeta^1_- \) and \( \zeta ^{* *}_\pm \) are also independent. We now present the
central result where the techniques differ somewhat from the case of Random Intelacements of simple
symmetric random walks. Its  proof is postponed to Section \Ref{sec:coupl-constr}.

\begin{theorem}
  \label{thm:coupling}
  Let \( 0<u_-< u_+=u_-(1+\delta) \). Suppose
  \begin{equation}
    \label{eq:hipot-thm-coupling}
    d(S_1,S_2)>4L, \qquad  \mathrm{cap}(S)\leq \frac{\delta}{4e^4c_d} \,L^{\frac{{d-1}}{2}} \quad \text{and} \qquad \delta \leq e^3,
  \end{equation}
  where \( L=\max\{\mathrm{rad}(S_1),\mathrm{rad}(S_2)\} \), \( S_1, S_2 \) as in (\Ref{eq:S-U}), and
  \( c_d \) defined in Lemma \Ref{lm:Green-LCLT}. Then there exists \( U \) satisfying
  (\Ref{eq:S-U}) and a probability space
  \( (\overline{\Omega}, \,\overline{\!\!\mathcal{A}}, \overline{\mathbb{P}}) \) where it can be
  defined a coupling, \( (\bar{\zeta}_{-,+}^*, \bar{\zeta}_-^{* *}) \) of \( \zeta_{-,+}^* \) and
  \( \zeta _-^{* *} \), such that
  \begin{equation}
    \label{eq:coupling-poisson-ineq}
    \overline{\mathbb{P}} (\bar{\zeta}_{-,+}^* < \bar{\zeta}_-^{* *}) \leq 2e^{-\mathrm{cap}(S)(u_+ -u_-)/20}.
  \end{equation}
\end{theorem}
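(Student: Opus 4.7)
The strategy is a sprinkling-coupling argument. First note that $\zeta^*_{-,+}$ and $\zeta^{**}_-$ are \emph{independent} point processes on $\mathcal{C}_{S,U^c}$, since they are generated from paths whose levels lie in the disjoint intervals $(u_-,u_+]$ and $[0,u_-]$ respectively. The plan is to show that the intensity $\nu^*$ of $\zeta^*_{-,+}$ comfortably dominates the intensity $\nu^{**}$ of $\zeta^{**}_-$ in total mass, and then couple via a common Poisson source and use a Chernoff-type estimate for the bad event.

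I would take $U_i$ to be the $L$-neighborhood of $S_i$ in $|{\,\cdot\,}|_\infty$, so that $\mathrm{rad}(U_i)\leq 2L$ and the hypothesis $d(S_1,S_2)>4L$ guarantees $U_1\cap U_2=\emptyset$. The geometry is arranged so that a walker exiting $U$ after visiting $S$ has climbed at least $L$ upward steps from its entry point, making a subsequent return to $S$ costly.

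The central quantitative step is an upper bound on the return probability
\[
  q := \sup_{y \in \partial^o U} P^+_y(H_S<\infty).
\]
Via the directed last-exit decomposition $P^+_y(H_S<\infty)=\sum_{z\in S}g(y,z)P^+_z(\widetilde{H}_S=\infty)$, the Green-function bound of Lemma~\ref{lm:Green-LCLT}, and the capacity identity $\sum_z P^+_z(\widetilde{H}_S=\infty)=\mathrm{cap}(S)$ from Lemma~\ref{lm:capacity-time-reverse}, one obtains $q\leq c_d\,\mathrm{cap}(S)\,L^{-(d-1)/2}\leq \delta/(4e^4)$ by the capacity hypothesis. Applying the strong Markov property at the successive return times $R_k$ yields $P^+_{\tilde e_S}(R_k<\infty)\leq q^{k-1}$, whence
\[
  \nu^{**}(\mathcal{C}_{S,U^c}) \leq u_-\,\mathrm{cap}(S)\,\frac{q}{1-q}, \qquad \nu^*(\mathcal{C}_{S,U^c}) \geq (u_+-u_-)\,\mathrm{cap}(S)\,(1-q),
\]
so the intensity gap $\nu^* -\nu^{**}$ is of order $\mathrm{cap}(S)(u_+-u_-)$.

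For the coupling itself, I would realize both processes through a common auxiliary Poisson point process on $\mathcal{C}_{S,U^c}$ with intensity dominating both $\nu^*$ and $\nu^{**}$, supplemented with independent marks deciding, at each atom, whether it belongs to $\zeta^*_{-,+}$, $\zeta^{**}_-$, both, or neither, with the appropriate conditional probabilities consistent with the marginals. The event $\{\bar\zeta^*_{-,+}<\bar\zeta^{**}_-\}$ is then contained in a Poisson large-deviation event for a count of parameter of order $\mathrm{cap}(S)(u_+-u_-)$, and a standard Chernoff bound yields the stated exponential estimate with rate $1/20$. The main obstacle is twofold: (i) establishing the bound on $q$ at the right scale --- crucially, using $\mathrm{cap}(S)$ rather than the potentially much larger $|S|$ in the aggregation, which relies on the time-symmetric identity of Lemma~\ref{lm:capacity-time-reverse} --- and (ii) arranging the coupling to yield domination of $\zeta^{**}_-$ by $\zeta^*_{-,+}$ as atomic integer-valued measures, not merely in total count; this is the delicate point, since $\zeta^{**}_-$ is compound Poisson (clusters of size $N\geq 2$ via the map $s_j$) while $\zeta^*_{-,+}$ is plain Poisson.
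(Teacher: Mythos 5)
Your bound on the return probability $q$ matches the paper's (eq.~\eqref{eq:1-beta}): decompose by the last visit to $S$, use the Green-function bound, and aggregate with Lemma~\ref{lm:capacity-time-reverse} to get $\mathrm{cap}(S)$ instead of $|S|$. Your identification of $\zeta^*_{-,+}$ and $\zeta^{**}_-$ as independent (disjoint supports in level and in number of excursions) is also correct. But you correctly flag the coupling of a compound Poisson process against a plain Poisson process as ``the delicate point'' and then do not resolve it, and that is exactly where the proposal breaks.

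Two specific problems. First, a ``common auxiliary Poisson point process with independent marks'' cannot produce $\zeta^{**}_-$: independent marking/thinning of a Poisson process yields Poisson processes, whereas $\zeta^{**}_-$ has \emph{clustered} atoms --- a path making $j$ excursions deposits $j$ correlated atoms via $s_j$. More seriously, even granting some marking scheme, you need domination \emph{atom by atom}, not in total mass. The paper achieves this by generating a single i.i.d.\ stream $\gamma_1,\gamma_2,\dots\sim\Gamma$, forming $j$-tuples from it as candidates for the $j$-excursion paths, and then \emph{thinning} each candidate tuple with a Bernoulli variable of parameter $\xi^j_-(\gamma)/[\lambda^j\Gamma^{\otimes j}(\gamma)]$. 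Because $\bar\zeta^{**}_-$ consists of a (random) prefix of the $\gamma_k$'s and $\bar\zeta^*_{-,+}$ contains the first $N^1_{-,+}$ of the same $\gamma_k$'s plus an independent remainder, the inclusion $\{N_-\le N^1_{-,+}\}\subset\{\bar\zeta^{**}_-\le\bar\zeta^*_{-,+}\}$ holds as measures. This is the trick your proposal is missing.

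Second, for that thinning to be well-defined, the Bernoulli parameter must lie in $[0,1]$, which requires a \emph{Radon--Nikodym density} bound $\xi^j_- \le (\delta/2e^4)^{j-1}\,u_-\,\mathrm{cap}(S)\,\Gamma^{\otimes j}$, not merely the total-mass bound $P^+_{\tilde e_S}(R_j<\infty)\le q^{j-1}$ that you derive. The density bound hinges on the pointwise inequality $P^+_x(X_{H_S}=y)\le\frac{\delta}{2e^4}\tilde e_S(y)$ (eq.~\eqref{ec:beta/1-beta}), which the paper proves via a time-reversal decomposition through an \emph{intermediate} set $V$ with $S\subset V\subset U$; your choice of $U$ as the $L$-neighborhood of $S$ leaves no room for such a $V$ (the paper uses the three nested balls $W_i=B(z_i,L)$, $V_i=B(z_i,2L)$, $U_i=B(z_i,3L)$). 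Without the density bound the coupling construction cannot go through.
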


\par The other ingredient needed to obtain the decorrelation inequalities, Theorem \Ref{thm:decorr},
used in the induction step of the renormalization scheme necessary to prove our main result is
\begin{theorem}
  \label{thm:decorr-coupling}
  Let \( 0<u_-<u_+ \), \( S \) and \( U \) subsets of \( \mathsf{Z}^d \) as in (\Ref{eq:S-U}), and
  \( K_1\subset S_1 \), \( K_2\subset S_2 \) nonempty. If
  \( (\bar{\zeta}_-^{* *}, \bar{\zeta}_{-,+}^*) \) is a coupling of \( \zeta _-^{* *} \) and
  \( \zeta_{-,+}^* \) in some probability space
  \( (\overline{\Omega}, \overline{\!\!\mathcal{A}}, \overline{\mathbb{P}}) \) satisfying
  \begin{equation}
    \label{eq:coupling-ineq}
    \overline{\mathbb{P}} (\bar{\zeta}_-^{**}\leq \bar{\zeta}_{-,+}^*) \geq 1-\epsilon
  \end{equation}
  for some \( \epsilon >0 \), then for \( A_1\in \sigma (\Psi_x : x\in K_1)\), \( A_2\in \sigma (\Psi_x : x\in K_2)\), local events,
  \begin{enumerate}[(a)]
  \item
    \;\( \mathbb{P}(\mathcal{I}^{u_-} \in A_1 \cap A_2) \le \mathbb{P}(\mathcal{I}^{u_+} \in A_1)\, \mathbb{P}(\mathcal{I}^{u_+}\in A_2) + \epsilon\)\; if
    \(A_1,A_2 \) are increasing,
  \item
    \;\( \mathbb{P}(\mathcal{I}^{u_+} \in A_1 \cap A_2) \le \mathbb{P}(\mathcal{I}^{u_-} \in A_1)\, \mathbb{P}(\mathcal{I}^{u_-}\in A_2) + \epsilon\)\; if
    \(A_1,A_2 \) are decreasing.
  \end{enumerate}
\end{theorem}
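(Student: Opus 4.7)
The plan is to use the coupling (\ref{eq:coupling-ineq}) to dominate the correlated multi-excursion contribution to \(\mathcal{I}^{u_-}\) by a single-excursion piece at the sprinkled level \(u_+\), and then to use Poisson thinning by starting site in \(S_1\) versus \(S_2\) to factor the joint probability. Two structural facts underpin this. First, for a local event \(A_i\) with support \(K_i \subset S_i\), the indicator \(\mathbf{1}_{A_i}(\mathcal{I}^u)\) is a monotone functional of the total excursion point measure \(\zeta_u^1 + \zeta_u^{**}\) (increasing for increasing \(A_i\), decreasing for decreasing \(A_i\)), since only excursions of \(\mu_{S,u}\) contribute to \(\mathcal{I}^u \cap K_i\). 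Second, by Poisson thinning the single-excursion process splits as \(\zeta_\pm^1 = \zeta_\pm^{1,1} + \zeta_\pm^{1,2}\) according to whether the starting site lies in \(S_1\) or \(S_2\), with \(\zeta_\pm^{1,1}\) and \(\zeta_\pm^{1,2}\) independent Poisson processes. Using \(U_1 \cap U_2 = \emptyset\) together with the separation enforced in the construction of \(U\) in Theorem \ref{thm:coupling}, a single excursion starting in \(S_i\) remains in \(U_i\) until its exit from \(U\), so \(\tilde{\mathcal{I}}^i := \bigcup_{w \in \bar{\zeta}_+^{1,i}} \mathrm{range}(w)\) obeys \(\tilde{\mathcal{I}}^i \cap K_j = \emptyset\) for \(j \neq i\), and \(\tilde{\mathcal{I}}^1\) is independent of \(\tilde{\mathcal{I}}^2\).

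For part (a), on the good event \(\{\bar{\zeta}_-^{**} \leq \bar{\zeta}_{-,+}^*\}\) the restriction of the coupling inequality to \(S_i\)-starting excursions gives \(\bar{\zeta}_-^{1,i} + \bar{\zeta}_-^{**,i} \leq \bar{\zeta}_+^{1,i}\), and monotonicity combined with the vanishing \(\tilde{\mathcal{I}}^i \cap K_j = \emptyset\) for \(j \neq i\) yields \(\mathbf{1}_{A_i}(\mathcal{I}^{u_-}) \leq \mathbf{1}_{A_i}(\tilde{\mathcal{I}}^i)\); the inclusion \(\tilde{\mathcal{I}}^i \subseteq \mathcal{I}^{u_+}\) together with \(A_i\) increasing then gives \(\mathbf{1}_{A_i}(\tilde{\mathcal{I}}^i) \leq \mathbf{1}_{A_i}(\mathcal{I}^{u_+})\). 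Taking expectations, using independence to factor \(\overline{\mathbb{E}}[\mathbf{1}_{A_1}(\tilde{\mathcal{I}}^1)\mathbf{1}_{A_2}(\tilde{\mathcal{I}}^2)]\) as \(\overline{\mathbb{E}}[\mathbf{1}_{A_1}(\tilde{\mathcal{I}}^1)]\,\overline{\mathbb{E}}[\mathbf{1}_{A_2}(\tilde{\mathcal{I}}^2)]\), and absorbing the bad event into \(\epsilon\) proves (a). Part (b) is symmetric: for decreasing \(A_i\), \(\tilde{\mathcal{I}}^i \subseteq \mathcal{I}^{u_+}\) gives \(\mathbf{1}_{A_i}(\mathcal{I}^{u_+}) \leq \mathbf{1}_{A_i}(\tilde{\mathcal{I}}^i)\) deterministically, independence factors the resulting product, and on the good event the comparison \(\mathbf{1}_{A_i}(\tilde{\mathcal{I}}^i) \leq \mathbf{1}_{A_i}(\mathcal{I}^{u_-})\) (from \(\tilde{\mathcal{I}}^i \supseteq \mathcal{I}^{u_-} \cap K_i\)) bounds each marginal by \(\mathbb{P}(\mathcal{I}^{u_-} \in A_i) + \epsilon\).

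The main obstacle is the geometric step asserting that no single excursion visits both \(K_1\) and \(K_2\); this is what allows the Poisson-thinning decomposition of \(\bar{\zeta}_+^1\) by starting site to align with the decomposition by the supports \(K_1, K_2\) and thus recover independence. This must be read in conjunction with the construction of \(U\) in Theorem \ref{thm:coupling}, where the spacing \(d(S_1, S_2) > 4L\) together with the chosen radii of \(U_i\) forces a gap of at least two lattice units between \(U_1\) and \(U_2\); a direct \(J\)-step from \(U_1\) to \(U_2\) is then impossible, so any excursion attempting to cross would have to exit \(U\) first and thereby end. Once this separation is in place, the remainder of the argument is just monotonicity, Poisson thinning, and a union bound on the bad event.
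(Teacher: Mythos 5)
The paper does not prove this theorem itself: it invokes Theorem~7.9 of \cite{drewitz2014introduction} and asserts that the argument is insensitive to the type of random walk. Your reconstruction is the DRS argument in essence: (i) \( \mathcal{I}^u\cap K_i \) is a monotone functional of the excursion point measures; (ii) on the good event \( \bar{\zeta}^{**}_-\leq\bar{\zeta}^*_{-,+} \) the total measure \( \zeta^1_-+\zeta^{**}_- \) is dominated by the Poisson process \( \zeta^1_+=\zeta^1_-+\zeta^*_{-,+} \); (iii) Poisson thinning splits \( \zeta^1_+ \) by starting site into independent pieces \( \zeta^{1,1}_+,\zeta^{1,2}_+ \); (iv) geometric confinement forces the trace of \( \zeta^{1,i}_+ \) to miss \( K_j \) for \( j\neq i \), and monotonicity and independence then factor the probability. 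This matches the cited proof; two steps need sharpening.

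\emph{Geometric confinement.} An excursion from \( S \) to \( U^c \) is killed only upon leaving \( U=U_1\cup U_2 \), so a step from \( U_1 \) into \( U_2 \) does \emph{not} end the excursion. The hypothesis (\Ref{eq:S-U}) only asks \( U_1\cap U_2=\emptyset \), which does not exclude a \( J \)-step from \( U_1 \) into \( U_2 \). You appeal to the construction of \( U \) in Theorem~\Ref{thm:coupling} where \( U_i=B(z_i,3L) \) and \( |z_1-z_2|_\infty>6L \); but that gives only \( d(U_1,U_2)\geq 1 \), and a jump in \( J \) moves \( |\cdot|_\infty \)-distance by exactly \( 1 \), so a direct step from \( U_1 \) into \( U_2 \) is still possible at the extremal separation. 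So the assertion ``a direct \( J \)-step from \( U_1 \) to \( U_2 \) is then impossible'' is not supported by the stated constants; you need \( d(U_1,U_2)\geq 2 \). This is a constants issue rather than a conceptual one (slightly widening \( |z_1-z_2| \) or shrinking the \( U_i \)-radii fixes it), but it is the linchpin of the independence argument and must be checked explicitly rather than hand-waved; the theorem as stated, under (\Ref{eq:S-U}) alone, does not give it to you.

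\emph{Error budget in part (b).} Bounding each marginal by \( \mathbb{P}(\mathcal{I}^{u_-}\in A_i)+\epsilon \) and multiplying yields \( \mathbb{P}(\mathcal{I}^{u_-}\in A_1)\mathbb{P}(\mathcal{I}^{u_-}\in A_2)+2\epsilon+\epsilon^2 \), not \( {}+\epsilon \) as claimed. In part (a) the bad event is charged exactly once \emph{before} the deterministic inclusion \( \tilde{\mathcal{I}}^i\subset\mathcal{I}^{u_+} \) is used, which is why a single \( \epsilon \) suffices. In (b) the order reverses: you apply the deterministic inclusion first and then invoke the good event in each marginal separately, paying the bad event twice. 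Either re-organize so the bad event is charged once, or accept a \( 2\epsilon \)-bound (harmless downstream since the renormalization only tracks the exponential rate of \( \epsilon \), but not what the theorem as stated asserts).

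Everything else — the observation that only excursions through \( S \) contribute to \( \mathcal{I}^u\cap K_i \), the Poisson thinning by starting site, the independence of \( \tilde{\mathcal{I}}^1,\tilde{\mathcal{I}}^2 \), and the monotonicity bookkeeping in (a) — is correct and follows the cited scheme faithfully.
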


This theorem is proved in \cite{drewitz2014introduction} Theorem 7.9. As
the reader can verify, the proof doesn't depend on the type of random walk used to define the
random interlacements process. 

\subsection{Renormalization Scheme}

\par We now describe the renormalization scheme used to prove the second half of our main result (see
Remark \Ref{rem:coupling}). We follow the general ideas in \cite{sznitman2012decoupling}, also
detailed in \cite{drewitz2014introduction}.  Consider the geometric scaling of lattice
\( \mathsf{Z}^d\) given by \(\mathcal{L}_n=L_n \mathsf{Z}^d\), where \(L_n=l_0^nL_0\) and
\(L_0 \geq 1\), \(l_0\geq1\) are integers.  Let
\begin{equation*}
  T_n=\cup_{0\le k\le n} T_{(k)} \quad \text{ where } \quad T_{(k)}=\{1,2\}^k,
\end{equation*}
be the canonical binary tree of depth \(n\), and call \emph{proper embedding} of \(T_n\) in
\(\mathsf{Z}^d\) a function \(\mathcal{T}:T_n\to \mathsf{Z}^d\) that satisfies:
\begin{enumerate}[(a)]
\item \(\mathcal{T}(\emptyset)\in\mathcal{L}_n\), \(\emptyset\in T_{(0)}\), the root of the tree,
\item for all \(0\le k\le n\) and \(m\in T_{(k)}\) we have \(\mathcal{T}(m)\in \mathcal{L}_{n-k}\),
\item for all \(0\le k< n\) and \(m\in T_{(k)}\) we have
\begin{equation*}
  \mathcal{T}(m,1),\mathcal{T}(m,2) \in B(\mathcal{T}(m),L_{n-k}) \qquad \text{and}
  \qquad |\mathcal{T} (m,1 ) - \mathcal{T} (m,2 )|_\infty  > \frac{L_{n-k}}{100},
\end{equation*}
\( (m,1) \) and \( (m,2) \) being the two descendants of \( m \) contained in \( T_{(k+1)} \).
\end{enumerate}
Let us denote by \(\Lambda _{x,n}\) the set of proper embeddings of \(T_n\) having root at
\(x\in\mathcal{L}_n\), and by
\begin{equation*}
  G_{\mathcal{T}}=\bigcap_{m \in T_{(n)}}G_{\mathcal{T}(m)}
\end{equation*}
the intersection of events \( G_{\mathcal{T}(m)} \in \mathcal{F} \) indexed by the leaves,
\( m \in T_{(n)} \), of the embedding \( \mathcal{T} \). For instance, the event
\(G_x= \{x \text{ is vacant}\} = \{x \in \mathcal{V}^u\} \) represents site \( x \) being
open. This family of events, for \( x\in \mathsf{Z}^d \), will be used later to prove Theorem
\Ref{thm:main}.

\par Let us also denote as \( \mathcal{T}_1 \) and \( \mathcal{T}_2 \) the two child embeddings
arising from the children of the root of \( \mathcal{T} \) and defined as
\( \mathcal{T}_1(m) =\mathcal{T}(1,m) \) and \( \mathcal{T}_2(m) =\mathcal{T}(2,m) \) for each
\( m \in T_{(k)} \), \( 0\leq k\leq n-1 \). Notice that
\begin{equation*}
  G_{\mathcal{T}} = G_{\mathcal{T}_1} \cap G_{\mathcal{T}_2}.
\end{equation*}

\par The following theorem present the decorrelation inequalities which are the main ingredient in
the renormalization procedure we need in order to prove Theorem \ref{thm:main}.
\begin{theorem}
  \label{thm:decorr}
  Let \( d\geq 3 \) and \(L_0=1\). There exist constants \( C=C(d)<\infty \) and
  \( D=D(d)<\infty \) such that for all \(l_0\geq C\) a multiple of 1000, \(n\geq 0\),
  \( x\in \mathcal{L}_{n+1} \), \(\mathcal{T}\in \Lambda_{x,n+1}\), \(u_->0\), and
  \( G_{\mathcal{T}(m)}\in \sigma (\Psi_y : y\in B(\mathcal{T}(m),2L_0))\), \( m \in T_{(n+1)} \),
  local events,
  \begin{enumerate}[(a)]
  \item if \( G_{\mathcal{T}(m)} \), \( m \in T_{(n+1)} \), are increasing,
    \begin{equation*}
      \mathbb{P}(\mathcal{I}^{u_-} \in G_{\mathcal{T}_1} \cap G_{\mathcal{T}_2}) \le \mathbb{P}(\mathcal{I}^{u_+} \in G_{\mathcal{T}_1}) \, \mathbb{P}(\mathcal{I}^{u_+}\in G_{\mathcal{T}_2}) +
      \epsilon(u_-,n),
    \end{equation*}
  \item if \( G_{\mathcal{T}(m)} \), \( m \in T_{(n+1)} \), are decreasing,
    \begin{equation*}
      \mathbb{P}(\mathcal{I}^{u_+} \in G_{\mathcal{T}_1} \cap G_{\mathcal{T}_2}) \le \mathbb{P}(\mathcal{I}^{u_-} \in G_{\mathcal{T}_1}) \, \mathbb{P}(\mathcal{I}^{u_-}\in G_{\mathcal{T}_2}) +
      \epsilon(u_-,n),
    \end{equation*}
  \end{enumerate}
  where \( u_+=\big(1+\delta(n,l_0)\big)u_- \),
  \begin{equation}
    \label{eq:delta-error}
    \delta(n,l_0)=D (n+1)^{-\frac{3}{2}}l_0^{-\frac{d-2}{4}} \qquad \text{and} \qquad \epsilon (u,n) = 2
    \exp\left(-2u(n+1)^{-3}L_n^{\frac{d-1}{2}}l_0^{\frac{1}{2}}\right).
  \end{equation}
\end{theorem}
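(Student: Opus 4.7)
The plan is to derive Theorem \ref{thm:decorr} by applying the sprinkling coupling (Theorem \ref{thm:coupling}) to carefully chosen sets $S_1, S_2$ associated with the two subtree embeddings $\mathcal{T}_1, \mathcal{T}_2$, and then feeding the coupling into the general decorrelation inequality (Theorem \ref{thm:decorr-coupling}). The separation $|\mathcal{T}(1) - \mathcal{T}(2)|_\infty > L_{n+1}/100$ built into property (c) of a proper embedding supplies the geometric room, and iterating this property down the subtree shows that the support $K_i := \bigcup_{m \in T_{(n)}} B(\mathcal{T}_i(m), 2L_0)$ of the local event $G_{\mathcal{T}_i}$ lies inside $B(\mathcal{T}(i), 2L_n)$ as soon as $l_0 \geq 2$.

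The key design decision is \emph{not} to take $S_i = K_i$ but to enlarge $K_i$ by adjoining a purely horizontal filler $F_i \subset \mathsf{Z}^{d-1} \times \{t_i\}$ concentrated on a single time slice, setting $S_i := K_i \cup F_i \subset B(\mathcal{T}(i), L)$ with $L$ of order $l_0 L_n / 600$. By (\ref{eq:spatial-set-capacity}) the filler contributes exactly $|F_i|$ to the capacity, so by subadditivity $\mathrm{cap}(S_i)$ can be dialled anywhere between the natural value $O(2^n)$ and the geometric maximum $O(L^{d-1})$; I would dial it to $\mathrm{cap}(S) \asymp \delta L^{(d-1)/2}$, saturating the capacity hypothesis of Theorem \ref{thm:coupling}. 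The separation hypothesis $d(S_1, S_2) > 4L$ then reduces to $l_0 L_n/100 > 6L$, which holds once $l_0 \geq C(d)$ is a large enough multiple of $1000$, and the hypothesis $\delta \leq e^3$ is in turn satisfied because the same lower bound on $l_0$ forces $D(d)\, l_0^{-(d-2)/4} \leq e^3$.

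Once Theorem \ref{thm:coupling} applies, (\ref{eq:coupling-poisson-ineq}) gives coupling error $2\exp(-\mathrm{cap}(S)(u_+-u_-)/20)$. Substituting $u_+ - u_- = u_- \delta$, $\mathrm{cap}(S) \asymp \delta L^{(d-1)/2}$, and $L \asymp l_0 L_n$ turns this into
\[
2\exp\!\bigl(-c\, u_- D^2 (n+1)^{-3}\, l_0^{(d-1)/2 - (d-2)/2}\, L_n^{(d-1)/2}\bigr) \;=\; 2\exp\!\bigl(-c\, u_- D^2 (n+1)^{-3}\, l_0^{1/2}\, L_n^{(d-1)/2}\bigr),
\]
in which the cancellation $(d-1)/2 - (d-2)/2 = 1/2$ reproduces the $l_0^{1/2}$ factor in $\epsilon(u_-,n)$ from (\ref{eq:delta-error}), and choosing $D(d)$ so that $c D^2 \geq 2$ matches the coefficient. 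Feeding the coupling into Theorem \ref{thm:decorr-coupling} with $K_i = \mathrm{supp}(G_{\mathcal{T}_i})$ and noting that the intersection of increasing (resp.\ decreasing) local events is increasing (resp.\ decreasing) then yields conclusions (a) and (b) of Theorem \ref{thm:decorr}.

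The main obstacle is the simultaneous calibration of $D(d)$ and $C(d)$, and the recognition that the filler must inflate $L$ to order $l_0 L_n$ rather than $L_n$. That inflation is what produces the extra factor $l_0^{(d-1)/2}$ in $L^{(d-1)/2}$ which, combined with the $l_0^{-(d-2)/2}$ coming from $\delta^2$, leaves only the $l_0^{1/2}$ recorded in $\epsilon(u_-, n)$. Once this cancellation is observed, $D(d)$ can be fixed by matching coefficients and $C(d)$ can be taken large enough that simultaneously the centres $\mathcal{T}(1), \mathcal{T}(2)$ are separated by more than $6L$ so that $d(S_1, S_2) > 4L$, the filler fits inside $B(\mathcal{T}(i), L) \setminus B(\mathcal{T}(i), 2L_n)$, and $\delta \leq e^3$. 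None of these calibrations require any input beyond Theorems \ref{thm:coupling} and \ref{thm:decorr-coupling} and the capacity identity (\ref{eq:spatial-set-capacity}).
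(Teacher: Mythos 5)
Your proposal is correct and follows essentially the same route as the paper: apply Theorem~\ref{thm:decorr-coupling} to $K_i=\bigcup_{m\in T_{(n)}}B(\mathcal{T}_i(m),2L_0)$, and verify hypothesis~(\ref{eq:coupling-ineq}) by invoking Theorem~\ref{thm:coupling} for enlarged sets $S_i\supset K_i$ of radius of order $L_{n+1}$ whose capacity is calibrated to $\asymp(n+1)^{-3/2}l_0^{d/4}L_n^{(d-1)/2}\asymp\delta L^{(d-1)/2}$, so that the $l_0^{(d-1)/2}$ from $L^{(d-1)/2}$ and the $l_0^{-(d-2)/2}$ from $\delta^2$ collapse to the $l_0^{1/2}$ in $\epsilon(u_-,n)$ — exactly the cancellation you isolate. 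The only place you differ is cosmetic: the paper does not construct a ``horizontal filler'' but instead notes that capacity is monotone and increases by at most $1$ per added vertex, so the target capacity can be reached by an intermediate-value argument inside $B(\mathcal{T}(i),L_{n+1}/1000)$; your filler sitting in a single time slice (using~(\ref{eq:spatial-set-capacity})) is one concrete way to do this, though the sentence ``the filler contributes exactly $|F_i|$ to the capacity'' should be softened, since $\mathrm{cap}(K_i\cup F_i)$ is only sandwiched between $\max\{\mathrm{cap}(K_i),|F_i|\}$ and $\mathrm{cap}(K_i)+|F_i|$ — which is still enough for the dialling step.
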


\begin{proof}
  The result follows from a direct application of Theorem \Ref{thm:decorr-coupling} with
  \begin{equation*}
    K_1 = \bigcup_{m \in T_{(n)}} B(\mathcal{T}_1(m),2L_0), \quad  K_2 = \bigcup_{m \in T_{(n)}} B(\mathcal{T}_2(m),2L_0).
  \end{equation*}
  To verify its hypothesis (\ref{eq:coupling-ineq}) we
  use Theorem \Ref{thm:coupling} with \( S \) such that
  \begin{equation*}
    \mathrm{rad}(S_i)=\frac{L_{n+1}}{1000} \quad \text{and} \quad K_i \subset S_i \subset B\left(\mathcal{T}(i),\frac{L_{n+1}}{1000}\right), \quad
    i=1,2, 
  \end{equation*}
  and
  \begin{equation}
    \label{ec:bounds-cap-S}
    \frac{1}{2}(n+1)^{-\frac{3}{2}}l_0^{\frac{d}{4}}L_n^{\frac{d-1}{2}} \leq  \mathrm{cap}(S) \leq
    2(n+1)^{-\frac{3}{2}}l_0^{\frac{d}{4}}L_n^{\frac{d-1}{2}},
  \end{equation}
  and \( \delta=\delta(n,l_0) \). That such an \( S \) exists is a consequence of the simple bounds
  \begin{equation*}
    \mathrm{cap}(K)\leq 2^{n+1}2d(4L_0+1)^{d-1}, \quad \quad \bigg(\frac{L_{n+1}}{1000}\bigg)^{d-1}\leq
    \mathrm{cap}\left(B\bigg(\mathcal{T}(1),\frac{L_{n+1}}{1000}\bigg)\right) , 
  \end{equation*}
  the fact that \( \mathrm{cap}(S) \) is non-decreasing and increases not more than 1 with each vertex
  added to \( S \), and that \( l_0 \geq 1000\) and \( L_0=1 \). It remains to show that \( S \)
  satisfy (\Ref{eq:hipot-thm-coupling}) and that
  \( 2e^{-\mathrm{cap}(S)(u_+ -u_-)/20} \leq \epsilon (u_-,n) \). The first inequality of
  (\Ref{eq:hipot-thm-coupling}) is clear, for the second (\Ref{ec:bounds-cap-S}) gives that it is
  enough to satisfy
  \begin{equation*}
    2\,l_0^{\frac{d}{4}}L_n^{\frac{d-1}{2}} \leq \frac{D\,
      l_0^{-\frac{d-2}{4}}}{4e^4c_d} \,\left(\frac{L_{n+1}}{1000}\right)^{\frac{{d-1}}{2}}= \frac{D\,
      l_0^{\frac{d}{4}}}{4e^4c_d} \,\left(\frac{L_n}{1000}\right)^{\frac{{d-1}}{2}};
  \end{equation*}
  We just need to take \( D(d) \geq 8e^4c_d 10^{3(d-1)/2} \).  By (\Ref{eq:delta-error}) and
  (\Ref{ec:bounds-cap-S}),
  \begin{equation*}
    \begin{split}
      2e^{-\mathrm{cap}(S)(u_+ -u_-)/20}
      &\leq 2\exp\left(-\frac{1}{40}(n+1)^{-\frac{3}{2}}l_0^{\frac{d}{4}}L_n^{\frac{d-1}{2}}\,u_-\delta\right)\\
      &= 2\exp\left(-\frac{D}{40}u_-(n+1)^{-3}l_0^{\frac{1}{2}}L_n^{\frac{d-1}{2}}\right) \leq  \epsilon(u_-,n),
    \end{split}
  \end{equation*}
  if we take also \( D(d) \geq 80 \). Finally, for the third inequality of
  (\Ref{eq:hipot-thm-coupling}) we need to choose \( C(d)\geq (D(d)e^{-3})^{\frac{4}{d-2}} \).
\end{proof}

\subsection{Percolation Theorem}

Now the proof of Theorem \Ref{thm:main} is straightforward, as it follows the same steps in
\cite{drewitz2014introduction}:
\begin{enumerate}
\item Use Theorem \Ref{thm:decorr} to obtain a new version of Theorem 8.5 of
  \cite{drewitz2014introduction}. The proof is the same, the only difference is that our function
  \begin{equation*}
    \epsilon (u,n)=2\exp\left(-2u(n+1)^{-3}L_n^{\frac{d-1}{2}}l_0^{\frac{1}{2}}\right),
  \end{equation*}
  looks a little different from theirs,
  \begin{equation*}
    \epsilon (u,n)=2\exp\left(-2u(n+1)^{-3}L_n^{d-2}l_0^{\frac{d-2}{2}}\right),
  \end{equation*}
  but this does not impact the argument of the proof.
\item Use this new theorem to obtain the following version of Theorem 8.7 of
  \cite{drewitz2014introduction}. In order to formulate it we need first to define the function
  \begin{equation}
    \overline{\epsilon}(u,l) = \frac{2e^{-ul^{\frac{1}{2}}}}{1-e^{-u l^{\frac{1}{2}}}}\,,
  \end{equation}
  and, for a family of \( \mathcal{F} \)-measurable events \( G=(G_z)_{z\in \mathsf{Z}^d} \), the event
  \( A(x,N,G) \) that there exists a path \( z_0, \dots , z_k \) in \( \mathcal{L}_0 \) with
  \( |z_i -z_{i-1}|_\infty=L_0 \), also called a \textasteriskcentered-path in \( \mathcal{L}_0 \),
  going from \( x \) to \( B(x,N)^c \) such that the event \( G_{z_i} \) occurs at every vertex
  \( z_i \) of the \textasteriskcentered-path, that is
  \begin{equation*}
    A(x,N,G) =\bigcup_{\substack{z_0, \dots , z_k \in  \mathcal{L}_0 \\z_0=x,\, |x-z_k|>N \\ |z_i-z_{i-1}|=L_0, \, 1\leq i\leq k, \, k\geq 1}}
    \bigcap_{i=0}^k G_{z_i}\, .
  \end{equation*}
  \begin{theorem}
    \label{thm:long-star-paths}
    Let \( d\geq 3 \), \(L_0=1\), and constants \( C=C(d) \) and \( D=D(d) \) as in Theorem
    \Ref{thm:decorr}. Then for all \(l_0\geq C\) a multiple of 1000, \(u>0\), \( \Delta \in\, ]0,1[ \),
    and shift invariant events \( G_y\in \sigma (\Psi_z : z\in B(y,2L_0))\),
    \( y\in\mathsf{Z}^d \), if the following conditions are satisfied,
    \begin{equation}
      \label{eq:hyp-long-star-paths}
      \prod_{n\geq 0}\big(1+\delta(n,l_0)\big) <1+\Delta , \quad \quad (2l_0+1)^{2d}\big[\mathbb{P}(\mathcal{I}^u\in G_0)+ \overline{\epsilon}(u(1-\Delta),l_0)\big] \leq
      e^{-1}, 
    \end{equation}
    (\( \delta(n,l_0) \) defined in (\Ref{eq:delta-error})) then there exists a constant
    \( c =c(u,\Delta,l_0)<\infty \) such that
    \begin{enumerate}[(a)]
    \item if \( G_x \), \( x\in\mathsf{Z}^d \), are increasing, then for any \( N\geq 1 \),
      \begin{equation*}
        \mathbb{P}\left(\,\mathcal{I}^{u(1-\Delta)} \in A(0,N,G)\,\right) \le ce^{-N^{\frac{1}{c}}}, \quad  \text{and}
      \end{equation*}
    \item if \( G_x \), \( x\in\mathsf{Z}^d \), are decreasing, then for any \( N\geq 1 \),
      \begin{equation*}
        \mathbb{P}\left(\,\mathcal{I}^{u(1+\Delta)} \in A(0,N,G)\,\right) \le ce^{-N^{\frac{1}{c}}}.
      \end{equation*}
    \end{enumerate}
  \end{theorem}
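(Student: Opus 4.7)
The plan is to adapt the scale-induction argument of Theorem 8.7 in \cite{drewitz2014introduction}, using our Theorem \ref{thm:decorr} as the decoupling input; the only substantive change is in the explicit forms of $\delta(n,l_0)$ and $\epsilon(u,n)$ (the relevant exponent is now $(d-1)/2$ instead of $d-2$), and this affects only a numerical check at the end.

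Set
\[p_n(v) = \sup_{x\in\mathcal{L}_n,\ \mathcal{T}\in\Lambda_{x,n}} \mathbb{P}(\mathcal{I}^v\in G_\mathcal{T}),\qquad u_n = u(1-\Delta)\prod_{k\ge n}\bigl(1+\delta(k,l_0)\bigr),\]
so that $u_n = u_{n+1}(1+\delta(n,l_0))$, $u_n\searrow u(1-\Delta)$, and the first hypothesis in (\ref{eq:hyp-long-star-paths}) yields $u_n<u$ for every $n$. Apply Theorem \ref{thm:decorr}(a) with $u_-=u_{n+1}$ and $u_+=u_n$ to each $\mathcal{T}\in\Lambda_{x,n+1}$, whose child sub-embeddings $\mathcal{T}_1,\mathcal{T}_2$ are depth-$n$ embeddings rooted in $\mathcal{L}_n\cap B(x,L_{n+1})$: this gives $\mathbb{P}(\mathcal{I}^{u_{n+1}}\in G_\mathcal{T})\le p_n(u_n)^2+\epsilon(u_{n+1},n)$, and hence $p_{n+1}(u_{n+1})\le p_n(u_n)^2+\epsilon(u_{n+1},n)$. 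Absorbing the combinatorial blowup across scales by rescaling $a_n := (2l_0+1)^{2d\cdot 2^n}\,p_n(u_n)$ converts this into $a_{n+1}\le a_n^2 + b_n$ with $b_n := (2l_0+1)^{2d\cdot 2^{n+1}}\,\epsilon(u_{n+1},n)$.

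Next, secure the base case $a_0\le e^{-1}$ from the second hypothesis in (\ref{eq:hyp-long-star-paths}): shift-invariance gives $p_0(u_0)=\mathbb{P}(\mathcal{I}^{u_0}\in G_0)$, and monotonicity in the level (combined with the term $\overline{\epsilon}(u(1-\Delta),l_0)$, which absorbs the accumulated $\sum_k b_k$-error coming from the recursion) lets one dominate $p_0(u_0)+\overline{\epsilon}$ by the displayed hypothesis. A routine induction then gives $a_n\le e^{-2^n}$ for every $n\ge 0$. The key technical step is the check $b_n\le a_n^2$ at each scale; with $L_n=l_0^n$ and $\epsilon(u_{n+1},n)=2\exp(-2u_{n+1}(n+1)^{-3}L_n^{(d-1)/2}l_0^{1/2})$, the factor $L_n^{(d-1)/2}=l_0^{n(d-1)/2}$ decays doubly-exponentially in $n$ and, for $d\ge 3$ and $l_0\ge C(d)$ large enough, dominates the $2^{n+1}$ rate required by $a_n^2$. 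This bookkeeping, where the exact exponents of Theorem \ref{thm:decorr} come into play, is the main obstacle.

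Finally, translate the tree bound to a $\ast$-path bound. Given $N$, choose $n$ with $L_n\le N<L_{n+1}$. Any $\ast$-path from $0$ to $B(0,N)^c$ admits, by a greedy dyadic extraction along the path, a proper embedding $\mathcal{T}\in\Lambda_{x,n}$ rooted at some $x\in\mathcal{L}_n\cap B(0,L_n)$ with every leaf on the path, so that all the events $G_{\mathcal{T}(m)}$ hold. A union bound over the $O(l_0^d)$ candidate roots and the $(2l_0+1)^{2d(2^n-1)}$ possible embeddings with that root yields
\[\mathbb{P}\bigl(\mathcal{I}^{u(1-\Delta)}\in A(0,N,G)\bigr)\ \le\ C\,l_0^d\,(2l_0+1)^{2d\cdot 2^n}\,p_n(u_0)\ \le\ C'\,a_n\ \le\ C'\,e^{-2^n}\ \le\ c\,e^{-N^{1/c}},\]
using $2^n\ge c''\,N^{\log 2/\log l_0}$ and monotonicity ($u_0<u_n$ could instead be handled directly by $G$ being increasing). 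Part (b) is obtained by the same scheme with Theorem \ref{thm:decorr}(b) and the symmetric level sequence $u_n=u(1+\Delta)\prod_{k\ge n}\bigl(1+\delta(k,l_0)\bigr)^{-1}$.
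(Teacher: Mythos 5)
Your proposal reconstructs, correctly and in essentially the same terms, the renormalization argument of Theorem~8.7 (and its workhorse Theorem~8.5) in the Drewitz--R\'ath--Sapozhnikov lecture notes, which is exactly what the paper's one-line proof defers to: define the worst-case probability $p_n$ over proper embeddings, pump it through the decoupling recursion coming from Theorem~\ref{thm:decorr}, establish a double-exponential decay in $n$, and convert it to a bound on $\ast$-paths by a union bound over roots and embeddings. The adaptations you flag --- the modified $\delta(n,l_0)$ and $\epsilon(u,n)$ with the exponent $(d-1)/2$ in place of $d-2$ --- are indeed the only places the numerics change, and they go through because $l_0^{(d-1)/2}\ge 2$ for $d\ge 3$ and $l_0$ large.

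A few small things deserve tightening. The phrase ``the factor $L_n^{(d-1)/2}=l_0^{n(d-1)/2}$ decays doubly-exponentially'' is inverted: that factor grows geometrically in $n$; it is $\epsilon(u,n)$ that then decays doubly-exponentially, which is what you actually use. In the last display, the quantity should be $p_n(u_n)$ rather than $p_n(u_0)$ --- one has $u(1-\Delta)\le u_n$ and $G$ increasing, so $\mathbb{P}(\mathcal{I}^{u(1-\Delta)}\in G_\mathcal{T})\le p_n(u_n)$ --- and the parenthetical ``$u_0<u_n$'' has the inequality reversed, since your level sequence for part~(a) is decreasing in $n$. Finally, ``a routine induction then gives $a_n\le e^{-2^n}$'' is a little too quick as stated: the bare recursion $a_{n+1}\le a_n^2+b_n$ with $a_n\le e^{-2^n}$ does not by itself yield $a_{n+1}\le e^{-2^{n+1}}$, so (as in DRS~Theorem~8.5) the inductive hypothesis must carry along the tail $\sum_{k\ge n}$ of error terms --- this is precisely what $\overline\epsilon(u(1-\Delta),l_0)$ is built to dominate, which you invoke but do not set up explicitly. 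None of this changes the conclusion; it is bookkeeping rather than a gap.
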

  The proof is the same presented in \cite{drewitz2014introduction}.
\item Use this theorem to prove our main result, Theorem \Ref{thm:main}, as follows.
\end{enumerate}
\begin{proof}
  (of Theorem \Ref{thm:main}) We want to show that there exists some finite \( u' \) for which
  \( \mathbb{P}(0\overset{\mathcal{V}^{u'}}{\longleftrightarrow}\infty)=0 \).  Let us take
  \( L_0=1 \). Since a nearest neighbor path is also a \textasteriskcentered-path in
  \( \mathcal{L}_0 \) we have
  \begin{equation*}
    \{0\overset{\mathcal{V}^{u'}}{\longleftrightarrow}\infty\} \subset \{0\overset{\mathcal{V}^{u'}}{\longleftrightarrow}B(0,N)^c\} \subset \{\mathcal{I}^{u'}\in A(0,N,G)\},
  \end{equation*}
  for every \( N\geq 1 \) and \( G_z=\{\xi_z=0\} \), \( z\in \mathsf{Z}^d \). An application of Theorem
  \Ref{thm:long-star-paths} would give us the desired result after taking the limit \( N\to\infty \),
  \begin{equation*}
    \mathbb{P}(0\overset{\mathcal{V}^{u'}}{\longleftrightarrow}\infty) \leq \lim_{N\to\infty}\mathbb{P}(\mathcal{I}^{u'}\in A(0,N,G)) \leq \lim_{N\to\infty}ce^{-N^{\frac{1}{c}}}=0.
  \end{equation*}

  \par The events \( G_z \) are decreasing, thus if we verify conditions
  (\Ref{eq:hyp-long-star-paths}) for some \( u \), \( \Delta \) and \( l_0 \) then 
  \( u'=u(1+\Delta) \). Take \( \Delta =1 /2 \). As \( \delta(n,l_0)\geq 0 \) for all \( n \), 
  \begin{equation*}
      \prod_{n\geq 0}[1+\delta(n,l_0)] \leq \exp\bigg( D\,l_0^{-\frac{d-2}{4}} \sum_{n\geq 0} (n+1)^{-\frac{3}{2}} \bigg),
  \end{equation*}
  implying that for \( l_0 = l_0^* \) sufficiently large the first inequality in
  (\Ref{eq:hyp-long-star-paths}) is satisfied. For the second inequality observe that
  \begin{equation*}
    \begin{split}
    & \mathbb{P}(\mathcal{I}^u\in G_0) = \mathbb{P}(0 \in \mathcal{V}^u) = e^{-u \mathrm{cap}(0)}=e^{-u} \quad  \quad \text{and} \\
    & \overline{\epsilon}(u(1-\Delta),l_0^*) = \overline{\epsilon}\Big(\frac{u}{2},l_0^*\Big) =
    \frac{2e^{-\frac{u}{2} \sqrt{l_0^*}}}{1-e^{-\frac{u}{2} \sqrt{l_0^*}}}\,,   
    \end{split}
  \end{equation*}
  both decreasing in \( u \). Then, for \( l_0 =l_0^* \) and \( u \) large enough both conditions 
  (\Ref{eq:hyp-long-star-paths}) are satisfied and therefore \( u'=\frac{3u}{2} \) is the sought
  for level parameter for which there is no percolation.
\end{proof}

\subsection{Coupling Construction}
\label{sec:coupl-constr}

\begin{proof}
  (of Theorem \Ref{thm:coupling}) We follow the general procedure presented in
  \cite{drewitz2014introduction}.  By the first condition in (\Ref{eq:hipot-thm-coupling}) there
  exist \( z_1,z_2\in \mathsf{Z}^d \) such that
  \begin{equation*}
    W_1=B(z_1,L)\supset S_1, \quad  W_2=B(z_2,L)\supset S_2 \quad \text{and} \quad  |z_1-z_2|>6L.
  \end{equation*}
  We also define for \( i=1,2 \), \( V_i=B(z_i,2L) \), \( U_i=B(z_i,3L) \), and
  \( W=W_1\cup W_2 \), \( V=V_1\cup V_2 \) and \( U=U_1\cup U_2 \). It follows that
  \( U_1\cap U_2=\emptyset \). This is the sufficiently large set \( U \) surrounding \( S \) we
  need in order to make the decorrelation work.

  Our first task is to establish a bound on the intensity measures
  of \( \zeta^*_{-,+} \) and \( \zeta^j_- \). Denote the first as \( \xi^1_{-,+} \). The second,
  \( \xi^j_- \), was already defined in (\Ref{eq:intensity-zeta-j}). We also denote as \( \Gamma \)
  the law of the excursions in \( \mathcal{C}_{S, U^c} \) defined by \( P_{\tilde{e}_S}^+ \), that
  is
  \begin{equation}
    \label{eq:excursions-law}
    \Gamma (w) = P_{\tilde{e}_S}^+[(X_0,\dots X_{T_U})=w], \quad  w\in \mathcal{C}_{S,U^c}.
  \end{equation}
  
  \begin{lemma}
    \label{lm:indep-excursions-ineq}
    \begin{equation*}
      \label{sec:intensity-inequality}
      \xi^1_{-,+} \geq \frac{1}{2}(u_+ - u_-) \, \mathrm{cap}(S)\,\Gamma \quad  \text{ and } \quad
      \xi^j_- \leq \left(\frac{\delta}{2e^4}\right)^{j-1} u_-\, \mathrm{cap}(S)\,\Gamma^{\otimes j}, \quad j\geq 2.
    \end{equation*}
  \end{lemma}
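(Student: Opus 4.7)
The plan is to decompose $\xi^1_\pm$ and $\xi^j_-$ via iterated strong Markov at the exit times $D_k$ and to bound the returns to $S$ through the capacity hypothesis and Lemma \ref{lm:Green-LCLT}. Under $P^+_{\tilde{e}_S}$ one has $R_1 = 0$ almost surely (since $\tilde{e}_S$ is supported on $S$), so Markov at $T_U$ gives
\[
  \xi^1_\pm(w) \;=\; u_\pm\,\mathrm{cap}(S)\,\Gamma(w)\,\bigl(1 - P^+_{w(T_U)}(H_S < \infty)\bigr),
\]
whence $\xi^1_{-,+}(w) = (u_+ - u_-)\,\mathrm{cap}(S)\,\Gamma(w)\bigl(1 - P^+_{w(T_U)}(H_S < \infty)\bigr)$, and the first inequality reduces to the bound $P^+_y(H_S < \infty) \le 1/2$ for $y \in U^c$. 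I would prove this via the last-exit decomposition $P^+_y(H_S < \infty) = \sum_{x \in S} g(y,x)\,P^+_x(\widetilde{H}_S = \infty) \le (\max_{x \in S} g(y,x))\,\mathrm{cap}(S)$ (using Lemma \ref{lm:capacity-time-reverse}). The directed-walk identity $|y-x|_1 \le 2(x_d - y_d)$ together with $|y-x|_\infty > 2L$ (from $y \in U^c$, $x \in S$) forces $x_d - y_d > L$ whenever $g(y,x) > 0$, so Lemma \ref{lm:Green-LCLT} yields $g(y,x) < c_d L^{-(d-1)/2}$. The capacity hypothesis and $\delta \le e^3$ then give $P^+_y(H_S < \infty) \le \delta/(4e^4) \le 1/(4e) < 1/2$.

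For $j \ge 2$, iterating the Markov property at $D_1, \dots, D_{j-1}$ decomposes
\[
  \xi^j_-(w^1,\dots,w^j) \;=\; u_-\,\mathrm{cap}(S)\,\Gamma^{\otimes j}(w^1,\dots,w^j)\prod_{k=2}^j \frac{P^+_{w^{k-1}(T_U)}(H_S < \infty,\,X_{H_S} = w^k(0))}{\tilde{e}_S(w^k(0))}\bigl(1-P^+_{w^j(T_U)}(H_S < \infty)\bigr).
\]
Bounding the last factor by $1$, it suffices to show that each transition ratio is at most $\delta/(2e^4)$. I would derive this from the time-reversal identity $P^+_y(X_{H_S} = x, H_S < \infty) = P^-_x(H_y < \widetilde{H}_S)$ (which follows from the equality of forward and reversed directed-path weights), the Markov decomposition $e_S(x) \ge P^-_x(H_y < \widetilde{H}_S)\,P^-_y(H_S = \infty)$ applied at $H_y$, and the downward analogue $P^-_y(H_S < \infty) \le \delta/(4e^4)$ of the first-inequality bound. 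The hypothesis $\delta \le e^3$ then absorbs the factor $1/(1 - \delta/(4e^4)) \le 4/3$ into a final constant of $\delta/(2e^4)$.

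The main obstacle is precisely this pointwise harmonic-measure-versus-equilibrium-measure comparison in the second inequality. While the total-mass version follows immediately from the hitting-probability bound of the first step, achieving a density comparison to $\Gamma^{\otimes j}$ in the directed setting (where $P^+_x(\widetilde{H}_S = \infty)$ and $P^-_x(\widetilde{H}_S = \infty)$ need not agree) requires the time-reversal identity above and careful bookkeeping of the $\mathrm{cap}(S)$ factor that naturally appears in each transition.
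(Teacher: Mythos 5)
Your treatment of the first inequality is correct and follows the same route as the paper: last-exit decomposition of $P_x^+(H_S<\infty)$ through the Green function, Lemma \ref{lm:capacity-time-reverse} to identify $\sum_{y\in S}P_y^+(\widetilde{H}_S=\infty)$ with $\mathrm{cap}(S)$, and the capacity hypothesis with Lemma \ref{lm:Green-LCLT} to conclude $P_x^+(H_S<\infty)\le 1/2$ for $x\in\partial^o U$. However, there is a genuine gap in your proof of the pointwise bound needed for the second inequality. Tracing your three ingredients,
\begin{equation*}
  P_y^+(X_{H_S}=x) \;=\; P_x^-(H_y<\widetilde{H}_S) \;\le\; \frac{e_S(x)}{P_y^-(H_S=\infty)} \;\le\; \frac{e_S(x)}{1-\delta/(4e^4)},
\end{equation*}
and since $e_S(x)=\mathrm{cap}(S)\,\tilde{e}_S(x)$ this gives a bound of $\frac{\mathrm{cap}(S)}{1-\delta/(4e^4)}\,\tilde{e}_S(x)$. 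The factor $1/(1-\delta/(4e^4))\le 4/3$ you propose to absorb is harmless, but the $\mathrm{cap}(S)$ factor does not go away, and the hypothesis in (\ref{eq:hipot-thm-coupling}) allows $\mathrm{cap}(S)$ to be of order $L^{(d-1)/2}$, which is enormous compared to the required prefactor $\delta/(2e^4)$. Acknowledging that the $\mathrm{cap}(S)$ factor needs ``careful bookkeeping'' does not close this gap, because your single-step decomposition produces no compensating small quantity.

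The missing ingredient is a second application of the Green-function bound, and for that one must interpose a shell between $S$ and $U^c$. The paper sets $V_i=B(z_i,2L)$ strictly between $W_i\supset S_i$ and $U_i=B(z_i,3L)$, and decomposes $P_x^+(X_{H_S}=y)$ for $x\in\partial^o U$, $y\in S$ by the last visit to $V^c$. After a time-reversal identity analogous to yours, this yields
\begin{equation*}
  P_x^+(X_{H_S}=y) \;\le\; \max_{z\in\partial^o V} g(x,z)\cdot\sum_{z\in\partial^o V}P_y^-\big(X_{\widetilde{H}_{S\cup V^c}}=z\big)
  \;\le\; \max_{z\in\partial^o V} g(x,z)\cdot 2\,e_S(y).
\end{equation*}
Because $d(\partial^o U,\partial^o V)\ge L$, Lemma \ref{lm:Green-LCLT} gives $\max_{z\in\partial^o V}g(x,z)\le c_d L^{-(d-1)/2}$, and it is precisely the product of this small Green-function quantity with $\mathrm{cap}(S)\le \frac{\delta}{4e^4 c_d}L^{(d-1)/2}$, coming from $e_S(y)=\mathrm{cap}(S)\tilde{e}_S(y)$, that produces the prefactor $\delta/(2e^4)$. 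Your argument collapses the shell and therefore loses exactly this compensating Green-function factor; to repair it you would need to insert such an intermediate boundary.
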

  \begin{remark}
    In fact, the geometry of the sets \( S \) and \( U \) do not allow more than two excursions,
    but we prove the last inequality for every \( j\geq 2 \) anyway, as there is almost no
    aditional difficulty.
  \end{remark}
  \begin{proof}
    As \( \zeta^*_{-,+}=\zeta^1_+ - \zeta^1_- \), by (\ref{eq:intensity-zeta-j}) we have
    \begin{equation*}
      \xi^1_{-,+}(w)= (\xi^1_+ - \xi^1_-)(w) = (u_+ - u_-) \,\mathrm{cap}
      (S)\,P_{\tilde{e}_S}^+[\,R_2=\infty, (X_0,\dots,X_{D_1})=w\,].
    \end{equation*}
    To obtain the first inequality apply the strong Markov property to get
    \begin{equation*}
      \begin{split}
        P_{\tilde{e}_S}^+[\,R_2=\infty, (X_0,\dots,X_{D_1})=w\,]
        &= P_{\tilde{e}_S}^+[\,H_S\circ\theta_{T_U}=\infty, (X_0,\dots,X_{T_U})=w\,]\\
        &\geq P_{\tilde{e}_S}^+[\,(X_0,\dots,X_{T_U})=w\,] \min_{x\in \partial^o U} P^+_x(H_S=\infty)\\
        &= (1-\max_{x\in \partial^o U}P^+_x(H_S<\infty))\, \Gamma (w).
      \end{split}
    \end{equation*}
    Noting that \( S \) is finite, we decompose the event \( \{ H_S<\infty \} \) by the last visit to
    \( S \) and then apply the Markov property to get
    \begin{equation}
      \label{eq:1-beta}
      \begin{split}
        P_x^+(H_S<\infty)
        &= \sum_{y\in S}\, \sum_{n\geq 0}\, P_x^+(X_n=y,X_{n+1} \not\in S, X_{n+2}\not\in S, \dots) \\
        &= \sum_{y\in S}\, \sum_{n\geq 0}\, P_x^+(X_n=y)\, P_y^+(\widetilde{H}_S = \infty) \\
        &=\; \sum_{y\in S} g(x,y)\,P_y^+(\widetilde{H}_S=\infty) \\
        &\leq  \max_{y\in S}g(x,y) \;\mathrm{cap}(S) \\
        &\leq \frac{1}{2} ,
      \end{split}
    \end{equation}
    where we used Lemma \Ref{lm:capacity-time-reverse} in next-to-last line, and in the last line
    the hypothesis (\Ref{eq:hipot-thm-coupling}) in conjunction with Lemma \Ref{lm:Green-LCLT} and
    \( d(S,U^c)>2L \) to obtain
    \begin{equation*}
      \max_{x\in \partial^oU, y\in S}g(x,y) \leq c_d (2L)^{-(d-1)/2}.
    \end{equation*}
    This proves the first inequality.

    \par By (\ref{eq:intensity-zeta-j}), for the second inequality we just have to show
    \begin{equation*}
      P_{\tilde{e}_S}^+\big[ R_j<\infty = R_{j+1}, (X_{R_k},\dots,X_{D_k})=w^k, 1\leq k\leq j \big] \leq \left(\frac{\delta}{2e^4}\right)^{j-1}
      \prod_{k=1}^j \Gamma(w^k),
    \end{equation*}
    or better,
    \begin{equation*}
      P_{\tilde{e}_S}^+\big[ R_j<\infty, (X_{R_k},\dots,X_{D_k})=w^k, 1\leq k\leq j \big] \leq
      \left(\frac{\delta}{2e^4}\right)^{j-1} \prod_{k=1}^j \Gamma(w^k),
    \end{equation*}
    for \( j\geq 1 \). We proceed by induction on \( j \). In case \( j=1 \), \( R_1=0 \)
    \( P_{\tilde{e}_S}^+ \)-almost surely, and then
    \begin{equation*}
      P_{\tilde{e}_S}^+\big[ R_1<\infty, (X_{R_1},\dots,X_{D_1})=w^1\big] = P_{\tilde{e}_S}^+\big[(X_0,\dots,X_{T_U})=w^1\big] = \Gamma (w^1).
    \end{equation*}
    Now assume \( j\geq 2 \). By the strong Markov property at time \( D_{j-1} \) and the induction
    hypothesis
    \begin{equation*}
      \begin{split}
        P_{\tilde{e}_S}^+\big[\, R_j<\infty, \; &(X_{R_k},\dots,X_{D_k}
        )=w^k, 1\leq k\leq j \,\big] \\
        & \!\leq P_{\tilde{e}_S}^+\big[ R_{j-1}<\infty,
          (X_{R_k},\dots,X_{D_k})=w^k, 1\leq k\leq j-1 \big] \\
        & \phantom{\leq P_{\tilde{e}_S}^+\big[ R_{j-1}<\infty}
          \max_{x \in \partial^oU} P_x^+\big[ R_1<\infty,
          (X_{R_1},\dots,X_{D_1})=w^j \big] \\
        & \!\leq \left(\frac{\delta}{2e^4}\right)^{j-2}\,  \prod_{k=1}^{j-1} \Gamma(w^k)
          \, \max_{x \in \partial^oU} P_x^+\big[ R_1<\infty,
          (X_{R_1},\dots,X_{D_1})=w^j \big].
      \end{split}
    \end{equation*}
    By the strong Markov property again, this time at time \( R_1 =H_S\),
    \begin{equation*}
      \begin{split}
        P_x^+\big[ R_1<\infty, (X_{R_1},\dots,X_{D_1}
        &)=w^j \big] \\
        &\!= P_x^+[H_S<\infty, X_{H_S}=w^j(0) ]\,
          P^+_{w^j(0)}\big[ (X_0,\dots,X_{T_U})=w^j \big] \\
        &\!= \frac{P_x^+[H_S<\infty, X_{H_S}=w^j(0) ]}{\tilde{e}_S(w^j(0))}\,
          P^+_{\tilde{e}_S}\big[ (X_0,\dots,X_{T_U})=w^j \big] \\
        &\leq \frac{\delta}{2e^4} \,\Gamma (w^j),
      \end{split}
    \end{equation*}
    where we are using the inequality
    \begin{equation}
      \label{ec:beta/1-beta}
      P_x^+(X_{H_S} = y) \leq \frac{\delta}{2e^4}\, \tilde{e}_S(y),\quad  y\in S, 
    \end{equation}
    to be proved right below (we omit the condition \( \{H_S<\infty\} \) as it is
    implicit). Replacing back in the previous expression finishes the induction. Observe that in
    case \( \tilde{e}_S(w^j(0))=0 \), the inequality is trivially satisfied.

    \par Let \( y\in S \) be arbitrary. In a similar way as we did before, decompose the event
    \( \{X_{H_S} = y\} \) by the last visit to \( V^c \) and then apply the Markov
    property,
    \begin{equation}
      \label{eq:beta}
      \begin{split}
        P_x^+(&X_{H_S} = y) \\
              &= \sum_{z\in \partial^oV}\, \sum_{n\geq 0}\, P_x^+(X_n=z, n < H_S, X_{n+1} \in V, X_{n+2} \in V, \dots, X_{H_S}=y\in V) \\
              &= \sum_{z \in \partial^oV}\, \sum_{n\geq 0}\, P_x^+(X_n=z, n < H_S)\; P_z^+( X_1 \in V, X_2 \in V, \dots, X_{H_S}=y\in V) \\
              &= \sum_{z \in \partial^oV}\, P_x^+(H_z <\infty , H_z < H_S)\; P_z^+(X_{\widetilde{H}_{S\cup V^c}}=y) \\
              &\leq \max_{z \in \partial^oV}\, P_x^+(H_z <\infty)\; \sum_{z\in \partial^oV} P_y^-(X_{\widetilde{H}_{S\cup V^c}}=z) .
      \end{split}
    \end{equation}
    In the last line we used that
    \begin{equation*}
     P_z^+(X_{\widetilde{H}_{S\cup V^c}}=y)\, =  P_y^-(X_{\widetilde{H}_{S\cup V^c}}=z) ,
    \end{equation*}
    for paths traveled in reverse time have the same probability. Using the strong
    Markov property again,
    \begin{equation*}
      \begin{split}
        e_S(y) = P^-_y(\widetilde{H}_S=\infty)
        &= \sum_{z\in \partial ^oV} P_y^-(X_{\widetilde{H}_{S\cup V^c}}=z, H_{S}\circ\theta_{\widetilde{H}_{S\cup V^c}}=\infty) \\
        &= \sum_{z\in \partial ^oV} P_y^-(X_{\widetilde{H}_{S\cup V^c}}=z) P^-_z(H_{S}=\infty) \\
        &\geq \min_{z\in \partial ^oV} P^-_z(H_{S}=\infty) \sum_{z\in \partial ^oV} P_y^-(X_{\widetilde{H}_{S\cup V^c}}=z) .
      \end{split}
    \end{equation*}
    Proceeding as in (\Ref{eq:1-beta}) we obtain
    \( \max_{z\in \partial^o V}P^-_z(H_S<\infty)\leq 1/2 \) or,
    \( \min_{z\in \partial ^oV} P^-_z(H_{S}=\infty)\geq 1/2 \). Now replacing back in
    (\Ref{eq:beta}) we arrive at
    \begin{equation*}
      \begin{split}
        P_x^+(X_{H_S} = y)
        &\leq \max_{z \in \partial^oV}\, P_x^+(H_z <\infty) \, 2\,e_S(y) \\
        &= 2\max_{z \in \partial^oV}\, g(x,z) \, \mathrm{cap}(S) \, \tilde{e}_S(y) \\
        &\leq \frac{\delta}{2e^4}\tilde{e}_S(y).
      \end{split}
    \end{equation*}
    In the last line we used the same reasoning as in the last line of (\Ref{eq:1-beta}) but didn't
    use the bound on \( \delta \). This finishes the proof of the second inequality.
  \end{proof}

  There exists a probability space,
  \( (\overline{\Omega},\, \overline{\!\!\mathcal{A}}, \overline{\mathbb{P}}) \), where we can
  define independent Poisson random variables \( N^1_{-,+} \sim \mathrm{Pois}(\lambda^1) \),
  \( N_-^j \sim \mathrm{Pois}(\lambda^j) \), \( j\geq 2 \), with
  \begin{equation*}
    \lambda^1 = \frac{1}{2}(u_+ -u_-)\,\mathrm{cap}(S), \quad  \quad \lambda^j = \left(\frac{\delta}{2e^4}\right)^{j-1}
      u_-\,\mathrm{cap}(S),
  \end{equation*}
  and \( \mathcal{C}_{S,U^c} \)-valued \( \gamma_1,\gamma_2, \dots \), independent and identically distributed as
  \( \Gamma \) (observe that the Poisson parameters are positive numbers).  Using
  \( \gamma_1, \gamma_2, \dots \), successively define samples of size \( N_-^j \) of \( j \)
  excursions as follows
  \begin{align*}
    & (\gamma_{{}_1},\gamma_{{}_2}), \dots ,(\gamma_{{}_{M_2-1}},\gamma_{{}_{M_2}}) \\
    & (\gamma_{{}_{M_2+1}},\gamma_{{}_{M_2+2}},\gamma_{{}_{M_2+3}}), \dots ,(\gamma_{{}_{M_3-2}},\gamma_{{}_{M_3-1}},\gamma_{{}_{M_3}}) \\
    &\quad \quad \quad \;\;  \dots \\
    & (\gamma_{{}_{M_{j-1}+1}}, \dots ,\gamma_{{}_{M_{j-1}+j}}), \dots ,(\gamma_{{}_{M_j-j+1}}, \dots ,\gamma_{{}_{M_j}}) \\
    &\quad \quad \quad \;\;  \dots\, ,
  \end{align*}
  where \( M_k = \sum_{j=2}^k jN_-^j \). Rows are independent and the elements of row \( j \) are
  independent and identically distributed as \( \Gamma^{\otimes j} \), call them
  \( \gamma _{j,1}, \dots \gamma _{j,N_-^j} \). For \( j\geq 2 \),
  \begin{equation*}
    \sigma _-^j = \sum_{i=1}^{N_-^j} \,\delta _{\gamma_{j,i}}
  \end{equation*}
  are independent Poisson point processes with intensity measure
  \( \lambda^j\,\Gamma ^{\otimes j} \). Let \( \sigma^1 \) be an
  independent Poisson point process on \( \mathcal{C}_{S,U^c} \) with intensity measure
  \begin{equation*}
    \xi^1_{-,+} - \lambda^1\,\Gamma,
  \end{equation*}
  and \( \{\alpha _{i,j}\}_{1\leq i\leq N_-^j, j\geq 2} \) also independent, conditional on
  \( (\gamma_k)_{k\geq 1} \) and \( (N_-^j)_{j\geq 2} \) Bernoulli random variables with parameter
  \begin{equation*}
    \overline{\mathbb{P}}(\alpha_{i,j} = 1) = \frac{\xi_-^j(\gamma_{i,j})}{\lambda^j\,\Gamma ^{\otimes j}(\gamma_{i,j})}\, .
  \end{equation*}
  That this choice of intensity measure and parameters is allowed is the content of Lemma
  \Ref{lm:indep-excursions-ineq}.

  \par We can now define, for \( j\geq 2 \), the thinning of \( \sigma _-^j \),
  \begin{equation*}
    \bar{\zeta} _-^j = \sum_{i=1}^{N_-^j} \,\alpha _{i,j}\,\delta _{\gamma_{i,j}},
  \end{equation*}
  which we see are independent Poisson point processes with intensity measure \( \xi_-^j \), implying
  that \( (\bar{\zeta} _-^j)_{j\geq 2} \) is distributed as \( (\zeta_-^j)_{j\geq 2} \). Then, by
  (\Ref{eq:zeta-star-star-minus}),
  \begin{equation*}
    \sum_{j\geq 2} s_j(\bar{\zeta}^j_-)
  \end{equation*}
  is distributed as \( \zeta^{* *}_- \).  Similarly, as the Poisson point process
  \begin{equation*}
    \Sigma^1_{-,+} = \sum_{k=1}^{N^1_{-,+}} \delta_{\gamma_k}
  \end{equation*}
  has intensity measure \( \lambda^1\,\Gamma \), the intensity measure of
  \( \Sigma^1_{-,+} + \sigma^1 \) is \( \xi^1_{-,+} \). Therefore the coupling we are looking for
  is
  \begin{equation*}
    \bar{\zeta}^*_{-,+} = \Sigma^1_{-,+} + \sigma ^1 \quad  \text{ and } \quad
    \bar{\zeta}^{* *}_- = \sum_{j\geq 2} s_j(\bar{\zeta} ^j_-).
  \end{equation*}
  Now, as \( \bar{\zeta}^j_- \leq \sigma^j_- \), it follows that
  \begin{equation*}
    \bar{\zeta}^{* *}_- \leq \sum_{j\geq 2} s_j(\sigma^j_-) = \sum_{k=1} ^{N_-} \delta_{\gamma _k},
  \end{equation*}
  where \( N_-=\sum_{j\geq 2}jN^j_- \). It is also clear that
  \begin{equation*}
    \sum_{k=1}^{N^1_{-,+}} \delta_{\gamma_k} = \Sigma^1_{-,+} \leq \bar{\zeta}^*_{-,+}. 
  \end{equation*}
  We obtain the following inclusion,
  \begin{equation}
    \label{eq:inclusion-coupling}
    \left\{N_- \leq  N^1_{-,+}\right\} \subset \left\{\sum_{k=1} ^{N_-} \delta_{\gamma _k} \leq  \sum_{k=1}^{N^1_{-,+}} \delta_{\gamma_k} \right\} \subset \left\{ \bar{\zeta}^{**}_- \leq \bar{\zeta}^*_{-,+}\right\}.
  \end{equation}
  To conclude the proof of the theorem we just need to bound
  \begin{equation*}
    \overline{\mathbb{P}}(N_{-,+}^1 < N_-) \leq \overline{\mathbb{P}}\left(N_{-,+}^1 \leq \frac{\lambda^1}{2}\right) + \overline{\mathbb{P}}\left( \frac{\lambda^1}{2} \leq N_-\right).
  \end{equation*}
  The heuristic here is that we can make the means
  \( \lambda ^1 = (1/2)(u_+ - u_-) \,\mathrm{cap}(S) \) and
  \( \sum_{j\geq 2} j\lambda^j = u_- \,\mathrm{cap}(S) \sum_{j\geq 2}j(\delta/2e^4)^{j-1} \)
  of \( N_{-,+}^1 \) and \( N_- \) large by making \( \mathrm{cap}(S) \) large. This would imply that
  \( N_{-,+}^1 \) and \( N_- \) will be close to their means which, although they have the same
  order of magnitude, they are not close to each other.
  
  \par By the Markov inequality
  \begin{equation*}
    \begin{split}
      \overline{\mathbb{P}}\left(N_{-,+}^1 \leq \frac{\lambda^1}{2}\right)
      &=  \overline{\mathbb{P}}\left(e^{-\frac{\lambda^1}{2}} \leq e^{-N_{-,+}^1}\right) \\
      &\leq e^{\frac{\lambda^1}{2}}\, \overline{\mathbb{E}} \left[e^{-N_{-,+}^1}\right] \\
      &= e^{\lambda ^1\left(\frac{1}{e}-\frac{1}{2}\right)} \leq e^{-\lambda^1 /10}.
    \end{split}
  \end{equation*}
  Similarly
  \begin{equation*}
    \begin{split}
      \overline{\mathbb{P}}\left( \frac{\lambda^1}{2} \leq N_- \right)
      &=  \overline{\mathbb{P}}\left( e^{\frac{\lambda^1}{2}} \leq e^{N_-} \right) \\
      &\leq e^{-\frac{\lambda^1}{2}}\, \overline{\mathbb{E}} \left[e^{N_-}\right] \\
      &= e^{-\frac{\lambda^1}{2} + \sum_{j\geq 2} \lambda^j(e^j-1)} \leq  e^{-\lambda^1 /10},
    \end{split}
  \end{equation*}
  where we used that \( \sum_{j\geq 2} \lambda^j(e^j-1) \leq \lambda^1 /e \), as the following simple calculation shows
  \begin{equation*}
    \begin{split}
      \sum_{j\geq 2} \lambda^j(e^j-1)
      &\leq u_-\mathrm{cap}(S) \sum_{j\geq 2} \left(\frac{\delta}{2e^4}\right) ^{j-1}e^j\\
      &= u_-\mathrm{cap}(S) \frac{\delta}{2e^2} \sum_{j\geq 0} \left(\frac{\delta}{2e^3}\right)^j \\
      &\leq \frac{1}{e}u_-\mathrm{cap}(S)\frac{\delta}{e} \\
      &\leq \frac{\lambda^1}{e}.
    \end{split}
  \end{equation*}
  We have proved the inequality
  \begin{equation*}
    \overline{\mathbb{P}}(N_{-,+}^1 < N_-) \leq 2\,e^{- \frac{\lambda^1}{10}},
  \end{equation*}
  and therefore also the theorem.
\end{proof}

\printbibliography

\end{document}